\newtheorem{theorem}{Theorem}[section]
\newtheorem{lemma}[theorem]{Lemma}
\newtheorem{proposition}[theorem]{Proposition}
\theoremstyle{definition}
\newtheorem{definition}[theorem]{Definition}
\newtheorem{example}[theorem]{Example}
\newtheorem{corollary}[theorem]{Corollary}
\theoremstyle{remark}
\newtheorem{remark}[theorem]{Remark}
\numberwithin{equation}{section}
\definecolor{red}{rgb}{1,0,0}
\definecolor{green}{rgb}{0,1,0}
\definecolor{blue}{rgb}{0,0,1}
\definecolor{refkey}{gray}{.625}
\definecolor{labelkey}{gray}{.625}
\definecolor{fuck}{gray}{.5}
\let\oldmarginpar\marginpar
\renewcommand\marginpar[1]{\-\oldmarginpar[\raggedleft\footnotesize #1]%
{\raggedright\footnotesize #1}}
\theoremstyle{plain}
\newcommand{\C}{\mathbb{C}}
\newcommand{\R}{\mathbb{R}}
\newcommand{\g}{\mathfrak{g}}
\DeclareMathOperator{\id}{id}
\begin{document}

\title{Atiyah classes of Lie bialgebras}

\author{Wei Hong}
\address{School of Mathematics and Statistics, Wuhan University,Wuhan 430072, China}
\address{Hubei Key Laboratory of Computational Science (Wuhan University), Wuhan 430072, China}
\email{hong\textunderscore  w@whu.edu.cn}

\keywords{Atiyah class, Lie bialgebras, Lie algebra pair}

\begin{abstract}
The Atiyah class was originally introduced by M.F. Atiyah. It has many developments in recent years. One important case is the Atiyah classes of Lie algebra pairs.
In this paper, we study the Atiyah class of the Lie algebra pair associated with a Lie bialgebra $(\g,\g^*)$. A simple description of the Atiyah class and the first scalar Atiyah class is given by the Lie algebra structures on $\g$ and $\g^*$. As its application, the Atiyah classes for some special cases are investigated.
\end{abstract}

\maketitle

%%%%%%%%%%%%%%%%%%%%%%%%%%%%%%%%%%%%%%%%%%%%%%%%%%%%%%%%%%%%%%%%%%%%%%%%
%%%%%%%%%%%%%%%%%%%%%%%%%%%%%%%%%%%%%%%%%%%%%%%%%%%%%%%%%%%%%%%%%%%%%%%%

\section{Introduction}
The Atiyah class was originally introduced by M. F. Atiyah \cite{Atiyah} in order to describe the obstruction of the existence of a holomorphic connection on a holomorphic vector bundle.
In the late 1990's, Kontsevich \cite{Kontsevich 99} and Kapranov \cite{Kapranov 99} revealed the relation between Atiyah class and Rozansky-Witten invariants.
Subsequent works have appeared in many situations
\cite{Calaque-VandenBergh, C-C-T 13},  \cite{Bordermann 12},
\cite{C-S-X 2016, M-S-X 15, Batakidis-Voglaire 18} and etc. One interesting case is the Atiyah class associated with a Lie algebra pair $(L, A)$ and an $A$-module $E$. 
The geometric meaning of the Atiyah class of a Lie pair was studied in 
\cite{Wang 58, Nguyen 65}.
There are recent developments of the study of the Atiyah classes of Lie pairs
\cite{Bordermann 12, C-C-T 13, Laurent-Voglaire 15, C-S-X 2016}.

In this paper, we investigate the Atiyah class of the Lie pair associated with a Lie bialgebra $(\g,\g^*)$, or more precisely, the Lie algebra $L=\g\Join\g^*$, its subalgebra $A=\g$ and the 
$\g$-module $E=L/A\cong\g^*$.
Let us denote $F$ by the map 
\begin{equation*}
\g\otimes\g\xrightarrow{\id\otimes(-ad^*)}\g\otimes End(\g^*).
\end{equation*}
Then $F$ is a morphism between the $\g$-modules $\g\otimes\g$ and 
$\g\otimes End(\g^*)$. It induces a map 
\begin{equation*}
H^1(\g, \g\otimes\g)\xrightarrow{F_*} H^1(\g,\g\otimes End(\g^*)):~F_*(\alpha)(x)=F(\alpha(x)),
\end{equation*}
for all $\alpha\in H^1(\g, \g\otimes\g)$ and $x\in\g$. 
We have the following theorem for the Atiyah class $\alpha_E$ associated with the triple $(L=\g\bowtie\g^{*}, A=\g, E=\g^*)$. 

\begin{theorem}\label{thm-AtiyahLiebi3}
Let $(\g, \g^*)$ be a Lie bialgebra with the associated map $\gamma:\g\mapsto \g\otimes\g$.
Let $\lambda\in\g^*\otimes\g\otimes End(\g^*)$ be defined by $\lambda(x,\xi)=ad_{ad_{\xi}^*(x)}^*\in End(\g^*)$ for all $x\in\g$ and $\xi\in\g^*$. Then
\begin{enumerate}
\item The map $\lambda:\g\mapsto\g\otimes End(\g^*)$ satisfies
 $\lambda=-F\circ\gamma$.
\item The cohomology class $[\lambda]\in H^1(\g,\g\otimes End(\g^*)$ satisfies 
$[\lambda]=-F_*[\gamma]$.
\item the Atiyah class $\alpha_E=[\lambda]$ associated with the triple $(L=\g\bowtie\g^{*}, A=\g, E=\g^*)$
vanishes if and only if $[\gamma]\in\ker F_*$.
\end{enumerate}
\end{theorem}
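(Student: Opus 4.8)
The plan is to handle the three assertions in turn, deriving (2) formally from (1) and obtaining the vanishing criterion in (3) after first identifying the Atiyah class with $[\lambda]$ via an explicit connection. For part (1) I would argue by a direct computation with the dual pairings. By definition of a Lie bialgebra the cobracket $\gamma$ is dual to the bracket on $\g^*$: writing $\gamma(x)=\sum x_{(1)}\otimes x_{(2)}$, one has $\langle \gamma(x),\xi\otimes\eta\rangle=\langle x,[\xi,\eta]_{\g^*}\rangle$. The coadjoint action of $\g^*$ on $\g$ then reads $ad^*_\xi(x)=\sum\langle x_{(1)},\xi\rangle\,x_{(2)}$, so that $ad^*_{ad^*_\xi(x)}=\sum\langle x_{(1)},\xi\rangle\,ad^*_{x_{(2)}}$. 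On the other hand $-F(\gamma(x))=\sum x_{(1)}\otimes ad^*_{x_{(2)}}$, and contracting the $\g$-slot against $\xi$ reproduces exactly $\lambda(x,\xi)=ad^*_{ad^*_\xi(x)}$. Unwinding these pairings, with care for the sign in the coadjoint convention, yields $\lambda=-F\circ\gamma$.

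For part (2) I would first note that $\gamma$ is a $1$-cocycle of $\g$ valued in $\g\otimes\g$ — this is precisely the compatibility axiom of the Lie bialgebra $(\g,\g^*)$ — so that $[\gamma]\in H^1(\g,\g\otimes\g)$ is well defined. Since $F$ is a morphism of $\g$-modules, as recorded in the Introduction, it commutes with the Chevalley--Eilenberg differentials and hence carries cocycles to cocycles and coboundaries to coboundaries; this is exactly what makes $F_*$ well defined on cohomology. Applying $F_*$ to the cocycle identity of part (1) gives $[\lambda]=[-F\circ\gamma]=-F_*[\gamma]$.

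The substance lies in part (3), where I would first establish $\alpha_E=[\lambda]$. Recall the Atiyah cocycle of the Lie pair: fixing the splitting $L=\g\oplus\g^*$ and any $L$-connection $\nabla$ on $E=\g^*$ extending the $\g$-action $\nabla_x=ad^*_x$, the Atiyah cocycle is $R^\nabla(x,\xi)=\nabla_x\nabla_\xi-\nabla_\xi\nabla_x-\nabla_{[x,\xi]}$ for $x\in\g=A$ and $\xi\in\g^*\cong L/A$, and its class is independent of the choice of $\nabla$. Choosing the convenient extension $\nabla_\xi=0$ for $\xi\in\g^*$ kills the first two terms, leaving $R^\nabla(x,\xi)=-\nabla_{[x,\xi]}$. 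Inserting the Drinfeld double bracket $[x,\xi]=ad^*_x\xi-ad^*_\xi(x)$, whose $\g^*$-component is annihilated by $\nabla$ and whose $\g$-component $-ad^*_\xi(x)$ acts by $-ad^*_{ad^*_\xi(x)}$, gives $R^\nabla(x,\xi)=ad^*_{ad^*_\xi(x)}=\lambda(x,\xi)$, whence $\alpha_E=[\lambda]$. Combined with part (2) this reads $\alpha_E=-F_*[\gamma]$, which vanishes if and only if $F_*[\gamma]=0$, that is, $[\gamma]\in\ker F_*$.

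The main obstacle I anticipate is the bookkeeping in this last step: pinning down the precise signs in the Drinfeld double bracket and in the two coadjoint actions so that the cocycle computation matches the sign $-F\circ\gamma$ of part (1), and checking that the chosen $\nabla$ genuinely extends the module structure and that the resulting $R^\nabla$ is a well-defined cocycle representing $\alpha_E$. Once the conventions are fixed consistently the individual computations are routine, and the equivalence in (3) then follows immediately from (2).
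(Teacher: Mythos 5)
Your proposal is correct and follows essentially the same route as the paper: the contraction identity $\imath_{\xi}\gamma(x)=ad_{\xi}^*x$ yielding $\lambda=-F\circ\gamma$, functoriality of $F_*$ for part (2), and the computation of the curvature of the extension with $\nabla|_{\g^*}=0$ (the choice $S=0$) to identify $\alpha_E=[\lambda]$ --- a step the paper has already packaged as Theorems \ref{thm-AtiyahLiebi} and \ref{thm-AtiyahLiebi2} and simply cites in its proof. The only blemish is the convention wobble you yourself flag: in part (1) your formula $ad_{\xi}^*(x)=\sum\langle x_{(1)},\xi\rangle\,x_{(2)}$ uses the plain dual map (the paper's convention, where the coadjoint action is $-ad^*$), whereas in part (3) your double bracket $[x,\xi]=ad_x^*\xi-ad_{\xi}^*(x)$ and the extension rule $\nabla_x=ad_x^*$ presuppose the opposite, built-in-minus convention; since $\lambda(x,\xi)=ad_{ad_{\xi}^*(x)}^*$ is quadratic in these signs the two flips cancel, so the conclusion is unaffected once a single convention is enforced.
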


Given a Lie algebroid pair $(L,A)$ and an $A$-module $E$, the scalar Atiyah classes  $c_k(E)$ is defined by Chen-Sti\'{e}non-Xu in \cite{C-S-X 2016}. 
Let $\kappa\in\g^*$ be the modular vector of the Lie algebra $\g$, defined by 
\begin{equation*}
\kappa(x)=tr(ad_x),\quad\forall x\in\g.
\end{equation*}
Let $\gamma:\g\rightarrow \g\otimes\g$ be the cocycle associated with the 
Lie bialgebra $(\g,\g^*)$.
Let the map $\imath_{\kappa}\gamma: \g\rightarrow\g$ be defined by
\begin{equation*}
(\imath_{\kappa}\gamma)(x)=\imath_{\kappa}\gamma(x),\quad\forall x\in\g,
\end{equation*}
where $\imath_{\kappa}\gamma(x)$ denotes by the contraction of $\kappa\in\g^*$ 
with the first part of $\gamma(x)\in\g\otimes\g$.
Then we have the following theorem for the first scalar Atiyah class $c_1(E)$ 
associated with the triple $(L=\g\bowtie\g^{*}, A=\g, E=\g^*)$.

\begin{theorem}\label{c1-thm}
Let $c_1(E)$ be the first scalar Atiyah class associated with the triple 
$(L=\g\bowtie\g^{*}, A=\g, E=\g^*)$. Then we have
\begin{enumerate}
\item
\begin{equation}
c_1(E)=-\frac{\sqrt{-1}}{2\pi}[\imath_{\kappa}\gamma].
\end{equation}
\item $c_1(E)$ vanishes if and only if there exists $v\in\g$ such that 
\begin{equation}\label{kappav-eqn2}
ad_{\kappa}^*=ad_v\in End(\g),
\end{equation}
where $ad_{\kappa}^*\in End(\g)$ is the dual map of $ad_{\kappa}\in End(\g^*)$.
\item The Equation \eqref{kappav-eqn2} is equivalent to 
\begin{equation}\label{kappav-eqn1}
ad_{\kappa+v}(\g)=0, 
\end{equation}
where $ad_{\kappa+v}$ is considered an element in $End(L)$, and $\g$ is considered as a subspace of $L$.
\end{enumerate}
\end{theorem}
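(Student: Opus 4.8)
The plan is to read off all three parts from the explicit Atiyah cocycle $\lambda$ of Theorem~\ref{thm-AtiyahLiebi3}, using only two elementary facts about the modular character: that $tr(ad_w^*)=tr(ad_w)=\kappa(w)$ for $w\in\g$, and the coadjoint-invariance $ad_x^*\kappa=0$, which holds because $\langle ad_x^*\kappa,y\rangle=\kappa([x,y])=tr([ad_x,ad_y])=0$. For part (1) I would start from the description of the first scalar Atiyah class as the trace of the Atiyah class over the $End(\g^*)$-factor, $c_1(E)=\frac{\sqrt{-1}}{2\pi}[tr(\alpha_E)]$, and substitute $\alpha_E=[\lambda]$. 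In dual bases $\{e_i\}\subset\g$ and $\{e^i\}\subset\g^*$ the cocycle reads $\lambda(x)=\sum_i e_i\otimes ad_{ad_{e^i}^*(x)}^*$, so its trace is $\sum_i e_i\,\kappa(ad_{e^i}^*(x))$. Pairing against an arbitrary $\eta\in\g^*$, rewriting $\kappa(ad_{e^i}^*(x))=\langle x,[e^i,\kappa]_{\g^*}\rangle$ and resumming through $\sum_i\langle e_i,\eta\rangle e^i=\eta$ collapses the sum to $\langle x,[\eta,\kappa]_{\g^*}\rangle=-\langle x,[\kappa,\eta]_{\g^*}\rangle$. Since the contraction and the duality $\langle\gamma(x),\xi\otimes\eta\rangle=\langle x,[\xi,\eta]_{\g^*}\rangle$ give $\langle\imath_\kappa\gamma(x),\eta\rangle=\langle x,[\kappa,\eta]_{\g^*}\rangle$, this yields $tr(\lambda)=-\imath_\kappa\gamma$ and hence part (1). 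The same relation exhibits $\imath_\kappa\gamma=ad_\kappa^*$ in $End(\g)$, and shows, via $ad_x^*\kappa=0$, that $\imath_\kappa$ is a morphism of $\g$-modules, so that $[\imath_\kappa\gamma]$ is a well-defined class.

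For part (2), part (1) reduces the vanishing of $c_1(E)$ to the vanishing of $[\imath_\kappa\gamma]$ in $H^1(\g,\g)$ for the adjoint representation. The $1$-coboundaries there are exactly the inner derivations: the coboundary of a $0$-cochain $w\in\g$ is $x\mapsto[x,w]=-ad_w(x)$, and as $w$ ranges over $\g$ these fill out $\{ad_v:v\in\g\}$. Combined with $\imath_\kappa\gamma=ad_\kappa^*$, the class $[\imath_\kappa\gamma]$ vanishes precisely when $ad_\kappa^*$ is inner, i.e.\ when there exists $v\in\g$ with $ad_\kappa^*=ad_v$, which is Equation~\eqref{kappav-eqn2}.

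For part (3) I would compute $ad_{\kappa+v}(x)=[\kappa+v,x]_L$ in the Drinfeld double and split it into $\g$- and $\g^*$-components. The intra-$\g$ term contributes $[v,x]_\g=ad_v(x)$, while the cross bracket $[\kappa,x]_L=-ad_\kappa^*(x)+ad_x^*\kappa$ contributes $-ad_\kappa^*(x)$ to the $\g$-component and $ad_x^*\kappa$ to the $\g^*$-component. Hence the $\g$-component of $ad_{\kappa+v}(x)$ equals $(ad_v-ad_\kappa^*)(x)$, which vanishes for all $x$ exactly when \eqref{kappav-eqn2} holds, while the $\g^*$-component $ad_x^*\kappa$ vanishes \emph{identically} by the coadjoint-invariance of $\kappa$. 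Therefore Equation~\eqref{kappav-eqn1}, $ad_{\kappa+v}(\g)=0$, collapses to the single endomorphism equation \eqref{kappav-eqn2}, which is the claimed equivalence.

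The computations are routine once conventions are fixed; the one real bookkeeping hazard is keeping the two coadjoint actions---of $\g$ on $\g^*$ and of $\g^*$ on $\g$---together with the two Lie brackets consistently signed inside the double. The single non-formal step, and the one I expect to carry the argument, is the observation in part (3): the $\g^*$-component $ad_x^*\kappa$ drops out for free, so it is exactly the coadjoint-invariance of the modular character that reduces the a~priori pair of conditions hidden in $ad_{\kappa+v}(\g)=0$ to the single equation \eqref{kappav-eqn2}.
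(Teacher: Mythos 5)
Your proposal is correct and follows essentially the same route as the paper: take the trace of the explicit cocycle $\lambda$ from Theorem \ref{thm-AtiyahLiebi3} to get $tr(\lambda)=-\imath_{\kappa}\gamma$, identify $\imath_{\kappa}\gamma=ad_{\kappa}^*$ via the pairing $\langle\imath_{\kappa}\gamma(x),\eta\rangle=\langle x,[\kappa,\eta]\rangle$, and reduce $ad_{\kappa+v}(\g)=0$ to $ad_{\kappa}^*=ad_v$ using the coadjoint-invariance $ad_x^*\kappa=0$, exactly as in the paper's proof (including its Lemma \ref{lem-kappagamma} for well-definedness). The only cosmetic difference is that you run the trace computation in dual bases and use skew-symmetry of the $\g^*$-bracket, where the paper writes $tr(\lambda)=(\id\otimes\kappa)\circ\gamma$ and invokes $\gamma(x)\in\g\wedge\g$ directly.
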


In \cite{Lu-Weinstein 90}, it is shown that $(L=sl(n,\C), \g=su(n),\g^*=sb(n,\C))$ and
$(L=sl(n,\C), \g=sb(n,\C),\g^*=su(n))$ are Manin triples. We investigate the Atiyah classes for both situations. In Example \ref{Lu-Weinstein-exa1}, we show that the Atiyah class associated with the triple $(L=sl(n,\C), A=\g=su(n),E=\g^*=sb(n,\C))$ does vanish. By contrast, in Proposition \ref{LW-prop}, we prove that the Atiyah class associated with the triple $(L=sl(n,\C), A=\g=sb(n,\C),E=\g^*=su(n))$ does not vanish.

{\bf Ackowledgements}  We would like to thank Zhanqiang Bai, Zhuo Chen, Camille Laurent-Gengoux, Zhangju Liu, Yu Qiao, Yannick Voglaire and Ping Xu for helpful discussions and comments. Hong's research is partially supported by NSFC grant 11401441. 

\section{Preliminary}
\subsection{Atiyah classes for Lie algebroid pairs}
In \cite{C-S-X 2016},  Chen, Sti\'{e}non and Xu  introduced the Atiyah class for Lie algebroid pairs. A Lie algebroid pair $(L, A)$ is a Lie algebroid $L$ together with a Lie subalgebroid $A$ over the same base manifold.
Assume that $E$ is an $A$-module, and $\nabla$ is an $L$-connection on $E$ extending its $A$-action. The curvature of $\nabla$ is the bundle map 
$R^{\nabla}_E: \wedge^{2}L\rightarrow End(E)$ defined by
\begin{equation}\label{Liepaircur-eqn1}
R_E^{\nabla}(l_1,l_2)=\nabla_{l_1}\nabla_{l_2}-\nabla_{l_2}\nabla_{l_1}-\nabla_{[l_1,l_2]}
\end{equation}
for all $l_1,l_2\in\Gamma(L)$.
Since $E$ is an $A$-module, the restriction of $R^{\nabla}_E$ to $\wedge^2 A$ vanishes. Hence the curvature induces a
section $R^{\nabla}_E\in\Gamma(A^*\otimes A^{\perp}\otimes End(E))$, or equivalently, a bundle map
$R_E^{\nabla}:A\otimes(L/A)\rightarrow End(E)$ given by
\begin{equation}\label{Liepaircur-eqn2}
R_E^\nabla(a,\bar{l})=\nabla_{a}\nabla_{l}-\nabla_{l}\nabla_{a}-\nabla_{[a,l]}
\end{equation}
for all $a\in \Gamma(A)$ and $l\in \Gamma(L)$.
The $L$-connection $\nabla$ is compatible with the A-module structure on E if and only if $R^{\nabla}_E=0$.
%%%%%%%%%%%
\begin{theorem} \label{C-S-X}\cite{C-S-X 2016}
\begin{enumerate}
\item The section $R_E^\nabla$ of $A^*\otimes A^{\perp}\otimes End(E)$ is a 1-cocycle for Lie algebroid $A$ with values in the $A$-module $A^{\perp}\otimes End(E)$. We call $R_E^\nabla$ the Atiyah cocycle associated with the $L$-conncetion $\nabla$ that extends the $A$-module structure of $E$.
\item The cohomology class $\alpha_{E}\in H^{1}(A, A^{\perp}\otimes End(E) )$ 
of the cocycle $R_E^\nabla$ does not depend on the choice of the
$L$-connetion extending the $A$-action. 
And the cohomology class $\alpha_{E}\in H^{1}(A, A^{\perp}\otimes End(E) )$  is called the Atiyah class of the $A$-module $E$.
\item  The Atiyah class $\alpha_E$ of $E$ vanishes if and only if there exists an $A$-compatible $L$-connection on $E$.
\end{enumerate}
\end{theorem}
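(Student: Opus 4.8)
The plan is to derive all three statements from the second Bianchi identity for the $L$-connection $\nabla$, supplemented by one direct coboundary computation. I regard $A^{\perp}\otimes End(E)$ as the $A$-module built from the Bott $A$-action on $L/A\cong (A^{\perp})^{*}$ and the adjoint $A$-action $\phi\mapsto[\nabla_{a},\phi]$ on $End(E)$. Identifying a section of $A^{*}\otimes A^{\perp}\otimes End(E)$ with a bundle map $\omega:A\otimes(L/A)\rightarrow End(E)$, the Chevalley-Eilenberg differential then reads
\begin{equation*}
(d^{A}\omega)(a_{1},a_{2})(\bar{l})=[\nabla_{a_{1}},\omega(a_{2},\bar{l})]-[\nabla_{a_{2}},\omega(a_{1},\bar{l})]-\omega([a_{1},a_{2}],\bar{l})+\omega(a_{1},\overline{[a_{2},l]})-\omega(a_{2},\overline{[a_{1},l]}).
\end{equation*}
As a preliminary point, $R^{\nabla}:\wedge^{2}L\rightarrow End(E)$ is $C^{\infty}$-bilinear, and $R_{E}^{\nabla}(a,\bar{l}):=R^{\nabla}(a,l)$ is well defined on $L/A$ because $R^{\nabla}(a,a')=0$ for $a,a'\in\Gamma(A)$; this is exactly what makes $R_{E}^{\nabla}$ a genuine section of $A^{*}\otimes A^{\perp}\otimes End(E)$.

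For part (1), I would start from the second Bianchi identity
\begin{equation*}
\sum_{\mathrm{cyclic}}\Big([\nabla_{l_{1}},R^{\nabla}(l_{2},l_{3})]-R^{\nabla}([l_{1},l_{2}],l_{3})\Big)=0
\end{equation*}
and specialize it to $l_{1}=a_{1},\ l_{2}=a_{2}\in\Gamma(A)$ and $l_{3}=l\in\Gamma(L)$. The term $[\nabla_{l},R^{\nabla}(a_{1},a_{2})]$ disappears since $R^{\nabla}$ vanishes on $\wedge^{2}A$, and after using $R^{\nabla}(l,a)=-R^{\nabla}(a,l)$ the surviving five terms coincide, term by term, with the expression for $(d^{A}R_{E}^{\nabla})(a_{1},a_{2})(\bar{l})$ above. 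This gives $d^{A}R_{E}^{\nabla}=0$, so $R_{E}^{\nabla}$ is a $1$-cocycle. I expect this identification of the Bianchi identity with the Chevalley-Eilenberg differential to be the crux; the only delicate point is tracking the signs produced by the antisymmetry of $R^{\nabla}$.

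For part (2), let $\nabla$ and $\nabla'$ be two $L$-connections that both extend the given $A$-action, and set $\Phi:=\nabla'-\nabla$. Because the two connections agree on $\Gamma(A)$, the difference $\Phi$ is $C^{\infty}$-linear and annihilates $A$, hence descends to a section of $A^{\perp}\otimes End(E)$, that is, a $0$-cochain with values in the coefficient module. Expanding $R^{\nabla'}(a,l)$ by means of $\nabla'_{a}=\nabla_{a}$ and $\nabla'_{l}=\nabla_{l}+\Phi(\bar{l})$, a short computation yields
\begin{equation*}
R_{E}^{\nabla'}(a,\bar{l})-R_{E}^{\nabla}(a,\bar{l})=[\nabla_{a},\Phi(\bar{l})]-\Phi(\overline{[a,l]})=(d^{A}\Phi)(a)(\bar{l}),
\end{equation*}
so the two Atiyah cocycles differ by the coboundary $d^{A}\Phi$ and represent one and the same class $\alpha_{E}\in H^{1}(A,A^{\perp}\otimes End(E))$. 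That at least one extending connection exists is seen by picking a vector-bundle splitting $L=A\oplus B$ and any $B$-connection on $E$, then adding it to the given $A$-action.

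For part (3), one implication is immediate: if an $A$-compatible $L$-connection $\nabla$ exists, then $R_{E}^{\nabla}=0$ and hence $\alpha_{E}=[0]=0$. For the converse, suppose $\alpha_{E}=0$ and fix any $L$-connection $\nabla$ extending the $A$-action, so that $R_{E}^{\nabla}=d^{A}\Phi$ for some $\Phi\in\Gamma(A^{\perp}\otimes End(E))$. Setting $\nabla':=\nabla-\Phi$, which still extends the $A$-action because $\Phi|_{A}=0$, the computation of part (2) gives $R_{E}^{\nabla'}=R_{E}^{\nabla}-d^{A}\Phi=0$. Thus $\nabla'$ is an $A$-compatible $L$-connection, completing the equivalence.
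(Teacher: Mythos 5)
The paper offers no proof of this theorem to compare against: it is quoted from \cite{C-S-X 2016} and used as a black box, so your argument is judged on its own and against the original source. Your proof is correct and self-contained. The key identification in part (1) works exactly as you say: specializing the second Bianchi identity
$\sum_{\mathrm{cyclic}}\bigl([\nabla_{l_1},R^{\nabla}(l_2,l_3)]-R^{\nabla}([l_1,l_2],l_3)\bigr)=0$
to $(a_1,a_2,l)$, the term $[\nabla_l,R^{\nabla}(a_1,a_2)]$ dies because $\nabla$ extends the $A$-action, and the five survivors are precisely $(d^A R_E^{\nabla})(a_1,a_2)(\bar l)$ with your Chevalley--Eilenberg formula for the module $A^{\perp}\otimes End(E)\cong Hom(L/A,End(E))$; I checked the signs and they match. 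Your single computation $R_E^{\nabla'}-R_E^{\nabla}=d^A\Phi$ for $\Phi=\nabla'-\nabla$ then settles part (2) and, applied with $\nabla'=\nabla-\Phi$, the nontrivial direction of part (3) in one stroke --- this is the same mechanism as in Chen--Sti\'enon--Xu's original treatment, except that they verify the cocycle identity by direct expansion where you invoke Bianchi; the Bianchi packaging is tidier but not a genuinely different route. Two small points deserve one explicit line each in a polished write-up: first, the existence of at least one $L$-connection extending the $A$-action (without which $\alpha_E$ is undefined) --- your splitting remark is right, and concretely one can take $\nabla_l=\nabla^{A}_{\mathrm{pr}_A l}+\nabla^{TM}_{\rho(\mathrm{pr}_B l)}$ for any $TM$-connection $\nabla^{TM}$ on $E$; second, that $\nabla-\Phi$ in part (3) is again an $L$-connection, which holds because $\Phi$ is $C^{\infty}$-linear (a bundle map), so subtracting it preserves the Leibniz rule.
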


Given a Lie algebroid pair $(L,A)$ and an $A$-module $E$, the scalar Atiyah classes  $c_k(E)$ is defined \cite{C-S-X 2016} by 
\begin{equation}
c_k(E)=\frac{1}{k!}(\frac{\sqrt{-1}}{2\pi})^k tr(\alpha_E^k)\in H^k(A,\wedge^k A^{\perp}).
\end{equation}
Here $\alpha_E^k$ denotes the image of $\alpha_E\otimes\cdots\otimes\alpha_E$ under the map 
$$H^1(A, A^\perp\otimes End(E))\wedge\cdots\wedge 
H^1(A, A^\perp\otimes End(E)),$$
which is induced by the composition in $End(E)$ and the wedge product in 
$\wedge^{\bullet}A^{\perp}$.

Let $(L, A)$ be a Lie algebroid pair. Then $E=L/A$ naturally becomes an $A$-module, with the $A$-modules structure on $E=L/A$ defined by 
$$a\cdot\overline{l}=\overline{[a,l]}$$
for all $a\in\Gamma(A)$ and $l\in\Gamma(L)$. 
In the special case of $(L, A)$ being a Lie algebra pair, we can define
the Atiyah class associated with $(L, A, L/A)$ by Theorem \ref{C-S-X}.

\subsection{Lie algebra modules and Lie bialgebras}
We first recall some necessary knowledge of Lie algebras.
Let $\g$ be a Lie algebra over the field $\mathbf{k}=\R$ or $\C$.
A representation of $\g$ on a $\mathbf{k}$-vector space $V$ is a morphism $\rho:\g\rightarrow End{V}$ satisfying $$\rho([x,y])=[\rho(x),\rho(y)]$$
 for all $x,y\in\g$.
The action map of $\g$ on $V$
$$\g\times V\rightarrow V: (x,v)\rightarrow x\cdot v=\rho(x)v, \quad\forall x\in\g,v\in V$$ 
gives a $\g$-module structure on $V$.

Suppose that $V, W$ are $\g$-modules with the associated representation $\rho_V$ and $\rho_W$. 
Then $V^*$, $End(V)$ and $V\otimes W$
are all $\g$-modules, with the corresponding representation given by
\begin{align*}
\rho_{V^*}=-\rho_{V}^*,\\
\rho_{End(V)}=[\rho_V,\cdot],\\
\rho_{V\otimes W}=\rho_V\otimes\id+\id\otimes\rho_W.
\end{align*}

The Lie algebra $\g$ acts on itself by the adjoint action: $x\in\g\mapsto ad_x\in End(\g),$
where $ad_x(y)=[x,y]$ for all $y\in\g$. The Lie algebra $\g$ acts on $\g^*$ by the coadjoint action:
$$x\in\g\mapsto -ad_x^*\in End(\g^*).$$ 
For a given Lie algebra $\g$, the vector space $\g\otimes\g$ and $\g\otimes End(\g^*)$ are both $\g$-modules,
with the $\g$-module structures given by
\begin{align}
x\cdot(y\otimes z)=ad_x(y)\otimes z+y\otimes ad_x(z),\label{g-module-eq1} \\
%x\cdot(y\wedge z)=[x,y]\wedge z+y\wedge[x,z],\\
x\cdot(y\otimes T)=ad_x(y)\otimes T+y\otimes[-ad_{x}^*,T]\label{g-module-eq2}
\end{align}
for all $x,y,z\in\g$ and $T\in End(\g^*)$.
The action of Lie algebra $\g$ on $\g\otimes\g$ in Equation \eqref{g-module-eq1} is also called the adjoint representation, denoted by $ad^{(2)}: \g\mapsto End(\g\otimes\g)$:
\begin{equation}
x\rightarrow ad_x\otimes\id+\id\otimes ad_x
\end{equation}
for all $x\in\g$.

{\bf In this paper, we take the $\g$-module structures above on the corresponding spaces
without special explanation.}

Given a $\g$-module $V$, the Lie algebra cohomology $H^{*}(\g,V)$ is defined by the 
Chevalley-Eilenberg complex.
The coboundry of $f\in Hom(\wedge^k\g, V)$
is an element in $\delta f\in Hom(\wedge^{k+1}\g, V)$, given by
\begin{align*}
(\delta f)(x_0,x_1,\ldots,x_n)=\sum_{i=0}^{k}(-1)^{i}\rho(x_i)f(x_0,\ldots,\hat{x_i},\ldots,x_k)\\
+\sum_{0\leq i<j\leq n}(-1)^{i+j}f([x_i,x_j],x_0,\ldots,\hat{x_i},\ldots,\hat{x_j},\ldots,x_k),
\end{align*}
for $x_0, x_1,\ldots, x_k\in\g$.

Next we will recall some classical theory of Lie bialgebras (see 
\cite{Kosmann-Schwarzbach1997}).
\begin{definition}
A Lie bialgebra is a Lie algebra $\g$ with a linear map $\gamma:\g\rightarrow\g\otimes\g$ such that
\begin{enumerate}
\item the dual map $\gamma^t: \g^*\otimes\g^*\rightarrow\g^*$ defines a Lie bracket on $\g^*$, i.e.,
is a skew-symmetric bi-linear map satisfying the Jacobi identity, and
\item $\gamma$ is a cocycle on $\g$ with values in $\g\otimes\g$, where $\g$ acts on $\g\otimes\g$ by
the adjoint representation $ad^{(2)}$.
\end{enumerate}
\end{definition}

The cocycle $\gamma:\g\rightarrow\g\otimes\g$ defines a Lie bracket on $\g^*$, determined by
$$\langle[\xi,\eta],x\rangle=\langle\gamma(x),\xi\otimes\eta\rangle,$$
for $\xi,\eta\in\g^*$ and $x\in\g$. In fact, $\gamma$ is a linear map from 
$\g$ to $\g\wedge\g$.
The map $\gamma:\g\mapsto \g\otimes\g$ is a cocyle, thus $\gamma$ defines a cohomology class $[\gamma]\in H^1(\g,\g\otimes\g)$.
If the cocycle $\gamma$ is a coboundry, i.e., $\gamma=\delta r$, 
with $r\in\g\otimes\g$, the corresponding Lie bialgebra
is called a \emph{coboundary Lie bialgebra}, 
and $r\in\g\otimes\g$ is called a \emph{$r$-matrix}.

The following is an equivalent definition of Lie bialgebra.
\begin{definition}
A Lie bialgebra consist of a pair of vector spaces $(\g,\g^*)$, such that
\begin{enumerate}
\item $\g$ and $\g^*$ are both Lie algebras,
\item the vector space $\g\oplus\g^*$ is a quadratic Lie algebra, 
with the non-degenerate bi-linear form on $\g\oplus\g^*$
defined by $\langle x+\xi,y+\eta\rangle=\langle x,\eta\rangle+\langle\xi,y\rangle$
for all $x,y\in\g$ and $\xi,\eta\in\g^*$.
\item $\g$ and $\g^*$ are Lie subalgebras of the Lie algebra $\g\oplus\g^*$.
\end{enumerate}
\end{definition}
The Lie algebra $\g\oplus\g^*$ is called the double of $\g$ and $\g^*$, 
denoted by $\g\bowtie\g^*$.
The bracket between $\g$ and $\g^*$ is given by
\begin{equation*}
[x,\xi]=-ad_{x}^*\xi+ad_{\xi}^*x
\end{equation*}
for all $x\in\g$ and $\xi\in\g^*$. The triple $(L=\g\Join\g^*, \g, \g^*)$ is called a \emph{Manin triple}.

\begin{example}\label{Lu-Weinstein-exa}
\cite{Lu-Weinstein 90} Let $sb(n,\C)$ be the Lie algebra consisting of all $n\times n$ traceless upper 
triangular complex matrices with real diagonal elements. Then we have 
$$sl(n,\C)=su(n)\Join sb(n,\C).$$
If we define a non-degenerate bi-linear form on $sl(n,\C)$ by
\begin{equation} \label{Im-trace-eqn}
\langle X,Y\rangle=Im(trace(XY))
\end{equation}
for all $X,Y\in sl(n,\C)$, then $sl(n,\C)$ becomes a quadratic Lie algebra, 
and $su(n)$ and $sb(n,\C)$ are maximal isotropic subspaces of $L$. 
The pairing between $\g=sb(n,\C)$ and $\g^*=su(n)$ is defined by
$$\langle x,\xi\rangle=Im(trace(x\cdot\xi))$$
for all $x\in\g=sb(n,\C)$ and $\xi\in\g^*=su(n)$.
Thus $(L=sl(n,\C), \g=su(n),\g^*=sb(n,\C))$ and
$(L=sl(n,\C), \g=sb(n,\C),\g^*=su(n))$ are both Manin triples.
Moreover, $(\g=su(n), \g^*=sb(n,\C))$ is a coboundary Lie bialgebra.
\end{example}

\section{Atiyah classes of Lie bialgebras}

\subsection{Atiyah class associated with the triple $(L=\g\bowtie\g^{*}, A=\g, E=\g^*)$}
Let $(\g, \g^*)$ be a Lie bialgebra.
Let $L=\g\bowtie\g^{*}$, $ A=\g$ and $E=\g^*\simeq L/A$. 
Then $(L,A)$ is a Lie pair, $E$ is an $A$-module. The $A$-action on $E=\g^*$ is the coadjoint action, and the $A$-action on $A^{\perp}\simeq(L/A)^*\simeq\g$ is the adjoint action.  
Let $\nabla: L\mapsto End(E)$ be an $A$-compatible $L$-connection on $E$. 
The map $\nabla: L\mapsto End(E)$ splits into two parts: 
$$\nabla |_{\g}: \g\mapsto End(\g^*) \quad \text{and} 
\quad \nabla |_{\g^*}: \g^*\mapsto End(\g^*),$$ 
where $\nabla |_{\g}: \g\mapsto End(\g^*) $ is exactly the coadjoint action of 
$\g$ on $\g^*$. 
Let us denote the linear map $\nabla |_{\g^*}: \g^*\mapsto End(\g^*)$ by $S$. 
Then  $R_{E}^{\nabla}: A\otimes L/A\mapsto End(E)$  becomes 
$R_{E}^{\nabla}: \g\otimes\g^*\mapsto End(\g^*)$.
For all $x\in\g$ and $\xi\in\g^*$, recall that $[x,\xi]=-ad_x^*\xi+ad_{\xi}^*x$.
 By Equation \eqref{Liepaircur-eqn2}, the curvature
$R_{E}^{\nabla}: \g\otimes\g^*\mapsto End(\g^*)$ can be written as
\begin{equation}\label{eqn-REnabla}
R_{E}^{\nabla}(x,\xi)=-ad_{x}^*S(\xi)+S(\xi)ad_{x}^*+S(ad_{x}^*(\xi))+ad_{ad_{\xi}^*(x)}^*.
\end{equation}
Applying Theorem \ref{C-S-X} in this case, we obtain

\begin{theorem}\label{thm-AtiyahLiebi}
\begin{enumerate}
\item The element $R_{E}^{\nabla}\in\g^*\otimes\g\otimes End(\g^*)$ is a 1-cocycle for the Lie algebra $\g$ with values in the $\g$-module $\g\otimes End(\g^*)$;
\item the corresponding cohomology class $\alpha_{E}\in H^{1}(\g, \g\otimes End(\g^*) )$, called the  Atiyah class associated with the triple $(L=\g\bowtie\g^{*}, A=\g, E=\g^*)$, does not depend on the linear map $S: \g^*\mapsto End(\g^*) $;
\item the Atiyah class $\alpha_E$ vanishes if and only if there exists a linear map $S: \g^*\mapsto End(\g^*) $ such that $R_{E}^{\nabla}=0$, or in the other words,
\begin{equation}\label{Eq-Atiyahclass}
-ad_{x}^*S(\xi)+S(\xi)ad_{x}^*+S(ad_{x}^*(\xi))+ad_{ad_{\xi}^*(x)}^*=0
\end{equation}
for all $x\in\g$ and $\xi\in\g^*$.
\end{enumerate}
\end{theorem}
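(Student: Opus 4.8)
The plan is to obtain all three assertions as the Lie-algebra specialization of Theorem \ref{C-S-X}, where the base manifold is a point, so that every Lie algebroid is a Lie algebra, every ``section'' is an element of the corresponding vector space, and the relevant cohomology is Chevalley--Eilenberg Lie algebra cohomology. The only local data I must pin down are the coefficient $\g$-module and the space of $L$-connections extending the given $A$-action; once these are identified, the cocycle property, the independence of the class, and the vanishing criterion are read off directly from the three items of Theorem \ref{C-S-X}.

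First I would describe the extending $L$-connections explicitly. Since $L=\g\oplus\g^*$ as a vector space, a $\mathbf{k}$-linear map $\nabla\colon L\to\End(E)$ is the same datum as the pair $(\nabla|_{\g},\nabla|_{\g^*})$. Requiring $\nabla$ to extend the $A=\g$-module structure on $E=\g^*$ forces $\nabla|_{\g}$ to be the coadjoint action $x\mapsto -ad_x^*$, while $\nabla|_{\g^*}=S$ remains an arbitrary linear map $S\colon\g^*\to\End(\g^*)$. Hence the set of $L$-connections extending the $A$-action is in bijection with the space of such maps $S$. The curvature formula \eqref{Liepaircur-eqn2} then yields \eqref{eqn-REnabla}; this is the one place where an actual computation enters, using the bracket $[x,\xi]=-ad_x^*\xi+ad_\xi^* x$ of the double.

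Next I would identify the coefficient module. Using the pairing that presents $\g\bowtie\g^*$ as a quadratic Lie algebra, $L^*\cong L$ and $A^{\perp}=\{\,l\in L:\langle l,a\rangle=0\ \forall a\in\g\,\}=\g$; moreover $A^{\perp}\cong(L/A)^*$ carries the $\g$-module structure dual to the coadjoint one on $L/A\cong\g^*$, which is precisely the adjoint action. Therefore $A^{\perp}\otimes\End(E)\cong\g\otimes\End(\g^*)$ as $\g$-modules, and the resulting action is exactly \eqref{g-module-eq2}. With this dictionary, part (1) is Theorem \ref{C-S-X}(1), and part (2)---independence of the class from $S$---is Theorem \ref{C-S-X}(2), since choosing $S$ is the same as choosing the extending connection. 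For part (3), Theorem \ref{C-S-X}(3) says $\alpha_E$ vanishes if and only if some extending connection is $A$-compatible, i.e.\ has $R_E^{\nabla}=0$; translating through the bijection $\nabla\leftrightarrow S$ and substituting \eqref{eqn-REnabla} gives exactly the solvability of \eqref{Eq-Atiyahclass}.

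I do not expect a serious obstacle: the real computational content is the derivation of \eqref{eqn-REnabla}, which is routine once the bracket of the double is in hand. The only point demanding care is the module bookkeeping---verifying that the $\g$-action on $A^{\perp}$ is the adjoint action and that the induced action on $A^{\perp}\otimes\End(E)$ matches \eqref{g-module-eq2}---so that Theorem \ref{C-S-X}, stated for the abstract module $A^{\perp}\otimes\End(E)$, may be quoted verbatim with $\g\otimes\End(\g^*)$ in its place.
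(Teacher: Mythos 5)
Your proposal is correct and takes essentially the same route as the paper: the paper also derives Theorem \ref{thm-AtiyahLiebi} by specializing Theorem \ref{C-S-X} to the Lie pair $(L=\g\bowtie\g^*, A=\g)$, identifying the extending $L$-connections with linear maps $S\colon\g^*\to\End(\g^*)$ via $\nabla|_{\g}=-ad^*$ and $\nabla|_{\g^*}=S$, and computing the curvature \eqref{eqn-REnabla} from the mixed bracket $[x,\xi]=-ad_x^*\xi+ad_\xi^*x$. Your additional verification of the module bookkeeping ($A^{\perp}\cong(L/A)^*\cong\g$ with the adjoint action, inducing \eqref{g-module-eq2} on $\g\otimes\End(\g^*)$) is precisely what the paper asserts without proof in its setup.
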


In Theorem \ref{thm-AtiyahLiebi}, the $\g$-module structure on $\g\otimes End(\g^*)$ is defined by Equation \eqref{g-module-eq2}.

The following corollary is an immediate consequence of Theorem \ref{thm-AtiyahLiebi}.
\begin{corollary}\label{cor-Liebialgebra1}
Let $(\g, \g^*)$ be a Lie bialgebra. Then the following conditions are equivalent:
\begin{itemize}
\item[(a)] the annihilator of $Center(\g)$ is  not an ideal of $\g^*$;
\item[(b)] $Center(\g)$ is not an invariant subspace of $\g$ under the coadjoint action of $\g^*$ on $\g\cong(\g^*)^*$;
\item[(c)] there exists $x\in Center(\g)$ and $\xi\in\g^*$ satisfying $ad_{\xi}^*(x)\not\in Center(\g)$. 
\end{itemize}
 If one of the above equivalent conditions is satisfied, then the Atiyah class 
 $\alpha_E$ associated with the triple $(L=\g\bowtie\g^{*}, A=\g, E=\g^*)$ does not vanish.
\end{corollary}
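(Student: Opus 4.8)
The plan is to treat the three equivalences as elementary duality statements and to reserve the real work for the non-vanishing claim. For (b) $\Leftrightarrow$ (c), I would simply unwind the definition: saying that $Center(\g)$ fails to be invariant under the coadjoint action of $\g^*$ on $\g\cong(\g^*)^*$ means precisely that $ad_\xi^*(x)\notin Center(\g)$ for some $x\in Center(\g)$ and $\xi\in\g^*$, which is (c) verbatim. For (a) $\Leftrightarrow$ (b) I would use the standard annihilator/ideal duality for the Lie algebra $\g^*$: for $\zeta\in(Center(\g))^{\perp}$, $\xi\in\g^*$ and $w\in Center(\g)$ one has $\langle[\xi,\zeta],w\rangle=\langle\zeta,ad_\xi^*(w)\rangle$, so $(Center(\g))^{\perp}$ is an ideal of $\g^*$ exactly when $ad_\xi^*(w)\in(Center(\g)^{\perp})^{\perp}=Center(\g)$ for all such $\xi,w$, i.e. exactly when $Center(\g)$ is coadjoint-invariant. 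The main care here is tracking the pairing conventions and invoking reflexivity in finite dimension.

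For the final assertion it suffices, by Theorem \ref{thm-AtiyahLiebi}(3), to show that condition (c) obstructs the existence of any linear map $S:\g^*\to End(\g^*)$ solving \eqref{Eq-Atiyahclass}. I would argue by contradiction: assume such an $S$ exists and evaluate \eqref{Eq-Atiyahclass} at a central element $x_0\in Center(\g)$. Since $ad_{x_0}=0$ forces $ad_{x_0}^*=0$, the first three terms of \eqref{Eq-Atiyahclass} all vanish (the third because $S(ad_{x_0}^*\xi)=S(0)=0$), leaving only $ad^*_{ad_\xi^*(x_0)}=0$ for every $\xi\in\g^*$. Because $T\mapsto T^*$ is injective and $ad_z=0$ iff $z\in Center(\g)$, this says $ad_\xi^*(x_0)\in Center(\g)$ for all $\xi$. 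Running this over all $x_0\in Center(\g)$ shows $Center(\g)$ is coadjoint-invariant, which directly contradicts (c). Hence no solution $S$ exists and $\alpha_E\neq 0$.

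The genuinely useful observation, and the only step that is not pure bookkeeping, is that substituting a central $x_0$ collapses the curvature equation \eqref{Eq-Atiyahclass} down to its last term, completely eliminating the unknown map $S$; this is what makes the non-vanishing immediate rather than requiring a cohomological computation. I expect the only mild pitfall to be consistency of sign and duality conventions (the coadjoint action $-ad_x^*$ versus the dual map $ad_x^*$, and the identification $\g\cong(\g^*)^*$), which must be tracked carefully so that the reduction $ad^*_{ad_\xi^*(x_0)}=0\Rightarrow ad_\xi^*(x_0)\in Center(\g)$ is valid; none of this presents a structural difficulty.
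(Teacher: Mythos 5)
Your proposal is correct and follows essentially the same route as the paper: the equivalences rest on the same pairing identity $\langle ad_{\xi}^*(x),\eta\rangle=\langle x,[\xi,\eta]\rangle$ (the paper proves (a)$\Leftrightarrow$(c) directly while you route through the double annihilator, a cosmetic difference requiring only finite-dimensionality), and your key observation is exactly the paper's: plugging a central $x$ into Equation \eqref{Eq-Atiyahclass} kills the three terms involving $S$, leaving $ad^*_{ad_{\xi}^*(x)}\neq 0$, so no compatible connection $S$ exists and $\alpha_E\neq 0$ by Theorem \ref{thm-AtiyahLiebi}.
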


\begin{proof}
\begin{enumerate}
\item We first prove the equivalence of the conditions.
\begin{itemize}
\item $(b)\Leftrightarrow(c)$ The equivalence of $(b)$ and $(c)$ is obvious.
\item $(a)\Rightarrow(c)$ If $Center(\g)^{\perp}$ is  not an ideal of $\g^*$,
then there exist $\xi\in\g^*$ and $\eta\in (Center(\g))^{\perp}\subset\g^*$, such that
$[\xi,\eta]\notin (Center(\g))^{\perp}$. Hence there exists $x\in\g$, such that
$\langle x,[\xi,\eta]\rangle\neq 0,$
which implies that
 $\langle ad_{\xi}^*(x),\eta\rangle=\langle x,[\xi,\eta]\rangle\neq 0$. 
 Since $\eta\in (Center(\g))^{\perp}$, we obtain that $ad_{\xi}^*(x)\not\in Center(\g)$.
\item $(c)\Rightarrow(a)$ The proof is similar as above. We skip it.
\end{itemize}
\item If one of the equivalent conditions is satisfied, i.e., the last condition is satisfied,
there exists $x\in Center(\g)$ and $\xi\in\g^*$ satisfying $ad_{\xi}^*(x)\not\in Center(\g)$.
The condition $x\in Center(\g)$ implies that $ad_{x}^*=0$.
And the condition $ad_{\xi}^*(x)\not\in Center(\g)$ implies that $ad_{ad_{\xi}^*(x)}^*\neq 0$. 
Therefore it does not exist the map $S: \g^*\mapsto End(\g^*) $ satisfying Equation \eqref{Eq-Atiyahclass}. 
By Theorem \ref{thm-AtiyahLiebi}, the Atiyah class $\alpha_E$ associated with the triple 
$(L=\g\bowtie\g^{*}, A=\g, E=\g^*)$ does not vanish.
\end{enumerate}
\end{proof}

\begin{example}(An example with non-vanishing Atiyah class)
Let $(\g, \g^*)$ be a 3-dimensional Lie bialgebra (see \cite{Hong-Liu 09}), 
with the Lie brackets on $\g$ and $\g^*$ being defined as
\begin{align*}
&[x_1,x_2]=x_3, \quad [x_2, x_3]=0,  \quad [x_3, x_1]=0;\\
&[\xi^1,\xi^2]=\xi^2, \quad [\xi^2, \xi^3]=0,  \quad [\xi^3, \xi^1]=-\xi^3;
\end{align*}
where $\{x_1, x_2, x_3\}$ is a basis of $\g$,  and $\{\xi^1, \xi^2, \xi^3\}$ is the dual basis of $\g^*$. The center of Lie algebra $\g$ is spanned by $x_3$.  
As $ad_{\xi^3}^*(x_3)=-x_1\not\in Center(\g)$, by Corollary \ref{cor-Liebialgebra1}, 
the Atiyah class $\alpha_E$ associated with the triple $(L=\g\bowtie\g^{*}, A=\g, E=\g^*)$ does not vanish.
\end{example}

Choosing $S=0$ in Theorem \ref{thm-AtiyahLiebi}, we get another version of Theorem \ref{thm-AtiyahLiebi}.

\begin{theorem}\label{thm-AtiyahLiebi2}
Let $(\g,\g^*)$ be a Lie bialgebra. Let $\lambda\in\g^*\otimes\g\otimes End(\g^*)$ be defined by $\lambda(x,\xi)=ad_{ad_{\xi}^*(x)}^*\in End(\g^*)$, 
for all $x\in\g$ and $\xi\in\g^*$.  Then
\begin{enumerate}
\item $\lambda\in\g^*\otimes\g\otimes End(\g^*)$ is a 1-cocycle for the Lie algebra $\g$ with values in
the $\g$-module $\g\otimes End(\g^*)$,
\item the cohomology class $[\lambda]\in H^1(\g,\g\otimes End(\g^*))$ coincide with the Atiyah class $\alpha_E$ associated with the triple $(L=\g\bowtie\g^{*}, A=\g, E=\g^*)$.
\item the cohomology class $[\lambda]$ vanishes if and only if there exists a linear map
$S: \g^*\mapsto End(\g^*)$ such that
\begin{equation}\label{eqn-Lambda}
\lambda(x,\xi)=ad_{x}^*\cdot S(\xi)-S(\xi)\cdot ad_{x}^*-S(ad_{x}^*(\xi)),
\end{equation}
for all $x\in\g$ and $\xi\in\g^*$.
\end{enumerate}
\end{theorem}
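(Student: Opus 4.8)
The plan is to obtain Theorem~\ref{thm-AtiyahLiebi2} as a direct specialization of Theorem~\ref{thm-AtiyahLiebi} by setting the free linear map $S$ equal to zero in the curvature formula~\eqref{eqn-REnabla}. With $S=0$, the first three terms of
$$R_{E}^{\nabla}(x,\xi)=-ad_{x}^*S(\xi)+S(\xi)ad_{x}^*+S(ad_{x}^*(\xi))+ad_{ad_{\xi}^*(x)}^*$$
vanish identically, leaving $R_{E}^{\nabla}(x,\xi)=ad_{ad_{\xi}^*(x)}^*=\lambda(x,\xi)$. Thus $\lambda$ is literally the Atiyah cocycle $R_{E}^{\nabla}$ arising from the particular $A$-compatible $L$-connection whose restriction $\nabla|_{\g^*}=S$ is the zero map, while $\nabla|_{\g}$ is the coadjoint action. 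The verification that $S=0$ does yield a genuine $L$-connection extending the $A$-action is immediate, since any choice of linear $S:\g^*\to End(\g^*)$ gives such a connection, as explained before Theorem~\ref{thm-AtiyahLiebi}.

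\emph{First} I would establish part (1). Since $\lambda=R_{E}^{\nabla}$ for the connection determined by $S=0$, part (1) of Theorem~\ref{thm-AtiyahLiebi} applies verbatim: $R_{E}^{\nabla}\in\g^*\otimes\g\otimes End(\g^*)$ is a 1-cocycle for $\g$ with values in the $\g$-module $\g\otimes End(\g^*)$. Hence $\lambda$ is a 1-cocycle. \emph{Next}, part (2) follows from part (2) of Theorem~\ref{thm-AtiyahLiebi}, which asserts precisely that the cohomology class of $R_{E}^{\nabla}$ is independent of the choice of $S$; therefore the class $[\lambda]$ obtained from $S=0$ equals the Atiyah class $\alpha_E$. \emph{Finally}, for part (3) I would argue that $[\lambda]$ vanishes if and only if $\lambda$ is a coboundary, i.e.\ $\lambda=\delta\sigma$ for some $\sigma\in\g\otimes End(\g^*)=Hom(\mathbf{k},\g\otimes End(\g^*))$ viewed as a $0$-cochain. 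Writing out the Chevalley--Eilenberg coboundary of such a $0$-cochain using the $\g$-module structure~\eqref{g-module-eq2}, and identifying the free element $\sigma$ with a linear map $S:\g^*\to End(\g^*)$ through the natural pairing, one sees that the coboundary condition is exactly Equation~\eqref{eqn-Lambda}. This reinterprets the vanishing criterion from Theorem~\ref{thm-AtiyahLiebi} part (3), where the existence of $S$ solving $R_{E}^{\nabla}=0$ with $S=0$ replaced by a general $S$ is rearranged into the form $\lambda(x,\xi)=ad_{x}^*S(\xi)-S(\xi)ad_{x}^*-S(ad_{x}^*(\xi))$.

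\emph{The main obstacle} will be part (3): making precise how the $0$-cochain $\sigma$ yielding $\lambda=\delta\sigma$ corresponds to the linear map $S$ in Equation~\eqref{eqn-Lambda}, and checking that the signs match the convention in~\eqref{g-module-eq2} for the action of $\g$ on $\g\otimes End(\g^*)$. Concretely, a $0$-cochain is just an element $T\in\g\otimes End(\g^*)$, but the relevant $S$ is a map $\g^*\to End(\g^*)$; the bridge is to recognize that $\lambda$ lives in $\g^*\otimes\g\otimes End(\g^*)$ and that contracting the $\g^*$-slot against $S$ produces the mixed-type expression appearing in~\eqref{eqn-Lambda}. Once this bookkeeping is settled, the equivalence is essentially a restatement of Theorem~\ref{thm-AtiyahLiebi}(3) after moving the vanishing-of-$R_{E}^{\nabla}$ condition to the other side, so no new analytic input is required—only careful tracking of the module structure and the coboundary formula.
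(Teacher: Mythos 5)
Your proposal is correct and takes essentially the same route as the paper, which obtains Theorem \ref{thm-AtiyahLiebi2} precisely by choosing $S=0$ in Theorem \ref{thm-AtiyahLiebi} (so that $\lambda$ is the Atiyah cocycle $R_E^{\nabla}$ of that connection) and reads part (3) as the rearrangement of Equation \eqref{Eq-Atiyahclass}, equivalently the coboundary condition $\lambda(x)=-x\cdot S$ recorded in the paper's subsequent remark. The sign bookkeeping you flag in matching the $0$-cochain with the map $S$ is harmless, since $S$ ranges over all linear maps and replacing $S$ by $-S$ does not affect the existence statement.
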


\begin{remark}
If we consider $S: \g^*\mapsto End(\g^*)$ as an element in $\g\otimes End(\g^*)$,
 Equation \eqref{eqn-Lambda} can then be written as the coboundary condition
\begin{equation}
\lambda(x)=-x\cdot S,
\end{equation}
where the action of $x\in\g$ on $S\in\g\otimes End(\g^*)$ is defined in Equation \eqref{g-module-eq2}.
\end{remark}

Let us denote $F$ by the map 
\begin{equation}
\g\otimes\g\xrightarrow{\id\otimes(-ad^*)}\g\otimes End(\g^*).
\end{equation}
It is easy to verify that $F$ is a morphism between the $\g$-modules $\g\otimes\g$ and 
$\g\otimes End(\g^*)$. Thus $F$ induces a map 
\begin{equation}
H^1(\g, \g\otimes\g)\xrightarrow{F_*} H^1(\g,\g\otimes End(\g^*)):~F_*(\alpha)(x)=F(\alpha(x)),
\end{equation}
for all $\alpha\in H^1(\g, \g\otimes\g)$ and $x\in\g$.

%Let $\lambda\in\g^*\otimes\g\otimes End(\g^*)$ be defined as in Theorem \ref{thm-%AtiyahLiebi2}. Then we have Theorem \ref{thm-AtiyahLiebi3}.

{\bf Proof of Theorem \ref{thm-AtiyahLiebi3}:}
\begin{proof}
For any $x\in\g$ and $\xi\in\g^*$, let us denote $\imath_{\xi}\gamma(x)$ by the 
contraction of $\xi\in\g^*$ with the first part of $\gamma(x)\in\g\otimes\g$. 
For any $\eta\in\g^*$, we have
\begin{align*}
\langle\imath_{\xi}\gamma(x),\eta \rangle
&=\langle x,[\xi,\eta]\rangle\\
&=\langle [x,\xi],\eta\rangle\qquad\text{(by the invariant product)}\\
&=\langle -ad_x^*\xi+ad_{\xi}^*x,\eta\rangle\\
&=\langle ad_{\xi}^*x,\eta\rangle.
\end{align*}
Thus we get
\begin{equation}\label{ixigammma-eqn}
\imath_{\xi}\gamma(x)=ad_{\xi}^*x.
\end{equation}
As $F$ is defined by $\g\otimes\g\xrightarrow{\id\otimes(-ad^*)}\g\otimes End(\g^*)$, 
$F\circ\gamma(x)$ is an element in $\g\otimes End(\g^*)$. 
Let us denote $\imath_\xi (F\circ\gamma(x))$ by the contraction of $\xi\in\g^*$ with the first part of $F\circ\gamma(x)\in\g\otimes End(\g^*)$.
Then we have
\begin{align*}
\imath_\xi (F\circ\gamma(x))&=\imath_{\xi}((\id\otimes(-ad^*))\gamma(x))\\
&=(\imath_{\xi}\otimes(-ad^*))\gamma(x)\\
&=-ad_{\imath_{\xi}\gamma(x)}^*.
\end{align*}
By Equation \eqref{ixigammma-eqn}, it follows
\begin{align*}
\imath_\xi (F\circ\gamma(x))=-ad_{\imath_{\xi}\gamma(x)}^*=-ad_{ad_{\xi}^*x}^*
&=-\lambda(x,\xi).
\end{align*}
Thus we obtain
$$\lambda=-F\circ\gamma.$$
As $F$ is a morphism of $\g$-modules, we get
$$[\lambda]=-F_*[\gamma].$$
By theorem \ref{thm-AtiyahLiebi}, the Atiyah class $\alpha_E=[\lambda]$. It vanishes if and only if $[\gamma]\in\ker F_*$.
\end{proof}

\begin{remark}
Notice that in Theorem \ref{thm-AtiyahLiebi3} , 
the map $\gamma$ is only related to the Lie algebra structures on $\g^*$, 
and $F$ is only related to the Lie algebra structures on $\g$.
\end{remark}

\begin{corollary}\label{coboundary-cor}
Let $(\g, \g^*)$ be a coboundry Lie bialgebra with the $r$-matrix, i.e., $\gamma=\delta r$, where $\gamma:\g\rightarrow \g\otimes\g$ is the cocycle associated with $(\g, \g^*)$ and $r\in\g\otimes\g$.
\begin{enumerate}
\item 
The Atiyah class $\alpha_E$ associated with the triple $(L=\g\bowtie\g^{*}, A=\g, E=\g^*)$ vanishes.
\item Let the map $S: \g^*\rightarrow End(\g^*)$ be defined by 
\begin{equation}
S(\xi)=-ad_{r(\xi)}^*
\end{equation}
for all $\xi\in\g^*$, where $r(\xi)\in\g$ denotes by the contraction of $\xi$ with the first part of $r\in\g\otimes\g$. Then $S$ satisfies the Equation \eqref{eqn-Lambda}.
\end{enumerate}
\end{corollary}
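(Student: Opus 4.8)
The plan is to deduce both statements directly from the identity $\lambda=-F\circ\gamma$ established in Theorem \ref{thm-AtiyahLiebi3}, combined with the fact that $F$ is a morphism of $\g$-modules and therefore commutes with the Chevalley--Eilenberg differential $\delta$.

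For part (1), I would note that if $\gamma=\delta r$ is a coboundary, then its cohomology class $[\gamma]\in H^1(\g,\g\otimes\g)$ vanishes. By Theorem \ref{thm-AtiyahLiebi3}(2) the Atiyah class satisfies $\alpha_E=[\lambda]=-F_*[\gamma]$, and since $F_*$ is linear we obtain $\alpha_E=-F_*(0)=0$. Equivalently, $[\gamma]=0$ lies trivially in $\ker F_*$, so part (3) of that theorem applies directly.

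For part (2), the key observation is to recognize the proposed $S$ as $F(r)$, viewed as an element of $\g\otimes End(\g^*)$. Writing $r=\sum_i a_i\otimes b_i$, one has $F(r)=\sum_i a_i\otimes(-ad^*_{b_i})$, and contracting $\xi$ with the first tensor factor gives $\imath_\xi F(r)=-\sum_i\langle\xi,a_i\rangle\, ad^*_{b_i}=-ad^*_{r(\xi)}=S(\xi)$, which identifies the map $S$ with the element $F(r)$. Then, treating $r$ as a $0$-cochain and using that a $\g$-module morphism satisfies $\delta\circ F=F\circ\delta$, I would compute $\lambda=-F\circ\gamma=-F\circ\delta r=-\delta\bigl(F(r)\bigr)=-\delta S$. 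By the Remark following Theorem \ref{thm-AtiyahLiebi2}, the equation $\lambda=-\delta S$, that is $\lambda(x)=-x\cdot S$, is precisely Equation \eqref{eqn-Lambda}, so this $S$ solves it.

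The computations involved are short, and there is no genuine obstacle: the one conceptual ingredient needed—that $F$ intertwines the two differentials—is guaranteed at the outset because $F$ is a $\g$-module morphism. The only point requiring care is bookkeeping of the contraction convention, namely on which tensor factor $r(\xi)$ and $\imath_\xi$ act, so that the identification $S=F(r)$ is consistent with the sign in $F=\id\otimes(-ad^*)$.
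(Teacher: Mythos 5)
Your proposal is correct and follows essentially the same route as the paper's own proof: part (1) via $[\gamma]=0$ and Theorem \ref{thm-AtiyahLiebi3}, and part (2) via the identification $S=F(r)$ (which the paper also obtains by contracting $\xi$ with the first factor of $F(r)$) together with $F\circ\delta=\delta\circ F$ to conclude $\lambda=-\delta S$, equivalent to Equation \eqref{eqn-Lambda}. Your explicit expansion $r=\sum_i a_i\otimes b_i$ just makes the contraction bookkeeping more transparent; no substantive difference.
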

\begin{remark}
The first part of the Corollary \ref{coboundary-cor} is due to K. Abdeljellil and
 Camille Laurent-Gengoux by private communication.
\end{remark}

\begin{proof}
\begin{enumerate}
\item For a coboundry Lie bialgebra $(\g,\g^*)$, the corresponding cohomology class 
$[\gamma]=0$. By theorem \ref{thm-AtiyahLiebi3}, the Atiyah class $\alpha_E$ associated with the triple $(L=\g\bowtie\g^{*}, A=\g, E=\g^*)$ vanishes.
\item By Theorem \ref{thm-AtiyahLiebi3}, we have
$\lambda=-F\circ\gamma=-F\circ\delta(r)$.

As $F: \g\otimes\g\xrightarrow{\id\otimes(-ad^*)}\g\otimes End(\g^*)$ is a morphism of 
$\g$-modules, where the $\g$-module structures are defined in Equation \eqref{g-module-eq1} and Equation \eqref{g-module-eq2}, we get 
$$F\circ\delta(r)=\delta (F(r)).$$
As a consequence, we have $\lambda=-\delta (F(r))$.

On the other hand, we have 
$$S(\xi)=-ad_{r(\xi)}^*=\imath_{\xi}(\id\otimes (-ad^*)(r))=\imath_{\xi}F(r),$$ 
which implies that 
$$S=F(r),$$ 
where $S:\g^*\rightarrow End(\g^*)$ is considered as an element in $\g\otimes End(\g^*)$.

Thus we have
$$\lambda=-\delta S,$$
which is equivalent to the Equation \eqref{eqn-Lambda}.
\end{enumerate}
\end{proof}

\begin{example}\label{Lu-Weinstein-exa1}
As shown in example \ref{Lu-Weinstein-exa}, $(\g=su(n),\g^*=sb(n,\C))$ is a coboundary Lie bialgebra. Hence by Corollary \ref{coboundary-cor}, the Atiyah class associated with
 $(L=\g\Join\g^*=sl(n,\C), \g=su(n), E=\g^*=sb(n,\C))$ vanishes.
\end{example}

\subsection{The first scalar Atiyah class associated with the triple $(L=\g\bowtie\g^{*}, A=\g, E=\g^*)$}
Let $(\g,\g^*)$ be a Lie bialgebra and let $\alpha_E\in H^1(\g, \g\otimes End(\g^*))$ 
be the Atiyah class associated with the triple 
$(L=\g\bowtie\g^{*}, A=\g, E=\g^*)$.  The map 
$$tr:\g\otimes End(\g^*)\xrightarrow{\id\otimes tr}\g\otimes k=\g$$
is a morphism of $\g$-modules.
The first scalar Atiyah class $c_1(E)$ \cite{C-S-X 2016} is defined by
\begin{equation}
c_1(E)=\frac{\sqrt{-1}}{2\pi}tr(\alpha_E)\in H^1(\g, \g).
\end{equation}

Let $\kappa\in\g^*$ be the modular vector of Lie algebra $\g$, defined by 
\begin{equation}\label{kappa-eqn}
\kappa(x)=tr(ad_x),\quad\forall x\in\g.
\end{equation}
Let $\gamma:\g\rightarrow \g\otimes\g$ be the cocycle associated with the 
Lie bialgebra $(\g,\g^*)$.
We define the map $\imath_{\kappa}\gamma: \g\rightarrow\g$ by
\begin{equation}
(\imath_{\kappa}\gamma)(x)=\imath_{\kappa}\gamma(x),\quad\forall x\in\g,
\end{equation}
where $\imath_{\kappa}\gamma(x)$ denotes by the contraction of $\kappa\in\g^*$ 
with the first part of $\gamma(x)\in\g\otimes\g$.

\begin{lemma}\label{lem-kappagamma}
The map $\imath_{\kappa}\gamma: \g\rightarrow\g$ is a cocycle for the Lie algebra $\g$ with values in $\g$. 
\end{lemma}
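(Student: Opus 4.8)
The plan is to exhibit $\imath_{\kappa}\gamma$ as the composite of the Lie bialgebra cocycle $\gamma$ with a morphism of $\g$-modules, so that the cocycle property is inherited automatically. Recall that $\g\otimes\g$ carries the adjoint representation $ad^{(2)}$ of Equation \eqref{g-module-eq1}, while the target $\g$ carries the ordinary adjoint action. First I would claim that the contraction map $\imath_{\kappa}:\g\otimes\g\to\g$, sending $y\otimes z\mapsto\kappa(y)\,z$, is a morphism of these two $\g$-modules. Once this is established, the statement follows from the general principle (the same one used for $F_{*}$ earlier in the excerpt) that a morphism of $\g$-modules carries $1$-cocycles to $1$-cocycles: since $\gamma$ is a $1$-cocycle for $\g$ with values in $(\g\otimes\g, ad^{(2)})$ by the very definition of a Lie bialgebra, the composite $\imath_{\kappa}\gamma=\imath_{\kappa}\circ\gamma$ is a $1$-cocycle for $\g$ with values in $\g$.

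To verify the claim, for $x\in\g$ and a decomposable element $y\otimes z$ I would compute $\imath_{\kappa}\bigl(ad^{(2)}_{x}(y\otimes z)\bigr)=\kappa([x,y])\,z+\kappa(y)\,[x,z]$ on one side, and $ad_{x}\bigl(\imath_{\kappa}(y\otimes z)\bigr)=\kappa(y)\,[x,z]$ on the other. The two expressions coincide precisely when $\kappa([x,y])=0$. This one identity is the crux of the proof, and it holds because $\kappa(w)=tr(ad_{w})$ and $ad$ is a Lie algebra homomorphism: $\kappa([x,y])=tr(ad_{[x,y]})=tr([ad_{x},ad_{y}])=0$, the trace of a commutator vanishing. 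In other words, the modular vector $\kappa$ annihilates $[\g,\g]$, which is exactly the invariance that makes $\imath_{\kappa}$ equivariant.

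Equivalently, and perhaps more transparently, I could unwind the abstract argument into a direct verification: applying $\imath_{\kappa}$ to the cocycle identity $ad^{(2)}_{x}\gamma(y)-ad^{(2)}_{y}\gamma(x)-\gamma([x,y])=0$ and using $\kappa([x,y])=0$ to discard the terms where $\kappa$ lands on a bracket, one recovers exactly $ad_{x}\bigl((\imath_{\kappa}\gamma)(y)\bigr)-ad_{y}\bigl((\imath_{\kappa}\gamma)(x)\bigr)-(\imath_{\kappa}\gamma)([x,y])=0$, which is $\delta(\imath_{\kappa}\gamma)=0$. I do not anticipate any genuine obstacle here; the only point requiring care is bookkeeping, namely that the contraction is taken on the \emph{first} tensor factor and that the sign and normalization conventions for the coadjoint action agree with those fixed in the preliminaries. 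The entire content of the lemma reduces to the elementary fact that $\kappa$ vanishes on commutators.
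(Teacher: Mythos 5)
Your proposal is correct and follows essentially the same route as the paper: both factor $\imath_{\kappa}\gamma$ as $\gamma$ followed by the contraction $\kappa\otimes\id:\g\otimes\g\to\g$, verify that this contraction is a morphism of $\g$-modules, and reduce everything to the identity $\kappa([x,y])=tr(ad_{[x,y]})=tr([ad_x,ad_y])=0$. The only difference is cosmetic: you state the vanishing of $\kappa$ on commutators explicitly as the crux, whereas the paper absorbs it silently in the step $tr(ad_{[x,y]})z+tr(ad_y)[x,z]=tr(ad_y)[x,z]$.
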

\begin{proof}
The map $\imath_{\kappa}\gamma: \g\rightarrow\g$ is the composition of 
the map $\gamma:\g\rightarrow \g\otimes\g$ and the map 
$\g\otimes\g\xrightarrow{\kappa\otimes\id}k\otimes\g=\g$. 
As $\gamma:\g\rightarrow \g\otimes\g$ is a cocycle, we only need to verify that the map 
$\g\otimes\g\xrightarrow{\kappa\otimes\id}k\otimes\g=\g$ is a morphism of 
between the $\g$-modules $\g\otimes\g$ and $\g$.

For any $x,y,z\in\g$, we have
\begin{align*}
(\kappa\otimes\id)(x\cdot(y\otimes z) )&=\kappa([x,y])z+\kappa(y)[x,z]\\
&=tr(ad_{[x,y]})z+tr(ad_y)[x,z]\\
&=tr(ad_y)[x,z]
\end{align*}
and 
\begin{align*}
&x\cdot((\kappa\otimes\id)(y\otimes z))\\
&=x\cdot(\kappa(y)z)=\kappa(y)[x,z]\\
&=tr(ad_y)[x,z].
\end{align*}
It proves that $\g\otimes\g\xrightarrow{\kappa\otimes\id}k\otimes\g=\g$ is a morphism of 
between the $\g$-modules. And consequently, the map $\imath_{\kappa}\gamma: \g\rightarrow\g$ is a cocycle.
\end{proof}

By Lemma \ref{lem-kappagamma}, the map $\imath_{\kappa}\gamma$ defines a cohomology class $[\imath_{\kappa}\gamma]\in H^1(\g,\g)$.

{\bf Proof of Theorem \ref{c1-thm}:}
\begin{proof}
\begin{enumerate}
\item By Theorem \ref{thm-AtiyahLiebi3}, we obtain 
$\alpha_E=[\lambda]$ and
$\lambda=-F\circ\gamma,$ 
where $F$ is the map 
 $$\g\otimes\g\xrightarrow{\id\otimes(-ad^*)}\g\otimes End(\g^*).$$
 Thus we have
 \begin{equation}\label{thmc1-eq1}
tr(\lambda)=-(\id\otimes tr(-ad^*))\circ\gamma=(\id\otimes tr(ad^*))\circ\gamma.
\end{equation}
As $$tr(ad^*_x)=tr(ad_x)=\kappa(x)$$ for all $x\in\g$, by Equation \eqref{thmc1-eq1}
we get that
\begin{equation}\label{thmc1-eq2}
tr(\lambda)=(\id\otimes\kappa)\circ\gamma.
\end{equation}
On the other hand, $\gamma$ is a map from $\g$ to $\g\wedge\g\subset\g\otimes\g$, 
which implies
\begin{equation} \label{thmc1-eq3}
\imath_{\kappa}\gamma=-(\id\otimes\kappa)\circ\gamma.
 \end{equation}
By Equations \eqref{thmc1-eq2} and \eqref{thmc1-eq3}, we obtain
$$c_1(E)=\frac{\sqrt{-1}}{2\pi}[tr(\gamma)]=-\frac{\sqrt{-1}}{2\pi}
[\imath_{\kappa}\gamma].$$
\item
By the arguments above, $c_1(E)$ vanishes if and only if
$$[\imath_{\kappa}\gamma]=0$$ in $H^1(\g,\g)$, or equivalently, 
there exist $v\in\g$ such that
\begin{equation}\label{c1-thm-eqn4}
\imath_{\kappa}\gamma(x)=ad_v(x)
\end{equation}
for all $x\in\g$.
The Equation \eqref{c1-thm-eqn4} is equivalent to
\begin{equation}\label{c1-thm-eqn5}
\langle \imath_{\kappa}\gamma(x),\eta\rangle=\langle ad_v(x),\eta\rangle
\end{equation}
for all $x\in\g$ and $\eta\in\g^*$.
The left side of Equation \eqref{c1-thm-eqn5} can be written as
\begin{gather*}
\langle \imath_{\kappa}\gamma(x),\eta\rangle=
\langle\gamma(x),\kappa\otimes\eta\rangle=\langle x,[\kappa,\eta]\rangle\\
=\langle x,ad_{\kappa}\eta\rangle=\langle ad_{\kappa}^*x,\eta\rangle.
\end{gather*}
Thus the Equation \eqref{c1-thm-eqn5} holds if and only if
$$ad_{\kappa}^*x=ad_v(x)$$
for all $x\in\g$.
Therefore $c_1(E)$ vanishes if and only if there exists $v\in\g$ such that 
$$ad_{\kappa}^*=ad_v.$$
\item
For all $y\in\g$, we have
$$\langle ad_x^*\kappa, y\rangle=\langle\kappa, [x,y]\rangle=trace(ad_{[x,y]})=trace([ad_x,ad_y])=0.$$
Thus we obtain that
\begin{equation}\label{kappav-eqn3}
ad_x^*\kappa=0
\end{equation}
for all $x\in\g$. 
For any $x\in\g$ and $\xi\in\g^*$, we have
\begin{equation*}
[\kappa+v, x]=-ad_{\kappa}^*x+ad_x^*\kappa+[v, x]
=(ad_v-ad_{\kappa}^*)x+ad_x^*\kappa.
\end{equation*}
By Equation \eqref{kappav-eqn3}, we have
\begin{equation}\label{kappav-eqn4}
[\kappa+v, x]=(ad_v-ad_{\kappa}^*)x.
\end{equation}
As a consequence, we get that $ad_{\kappa+v}(\g)=0$  if and only if $ad_{\kappa}^*=ad_v$.
\end{enumerate}
\end{proof}

\subsection{The Atiyah class of $(L=\g\Join\g^*=sl(n,\C), A=\g=sb(n,\C), E=\g^*=su(n))$}

As shown in Example \ref{Lu-Weinstein-exa}, $(L=\g\Join\g^*=sl(n,\C), \g=sb(n,\C),\g^*=su(n))$ is a Manin triple. The non-degenerate bi-linear form on $L=sl(n,\C)$ 
is defined by 
\begin{equation}
\langle X,Y\rangle=Im(trace(XY))
\end{equation}
 for all $X,Y\in sl(n,\C)$. 

Let $\mathfrak{t}$ be the subspace of $\g=sb(n,\C)$ consisting of all $n\times n$ real diagonal traceless matrices. Let $\mathfrak{n}_{+}$ be the subspace of $\g=sb(n,\C)$ consisting of all $n\times n$ strictly upper triangular matrices. Then we have 
$$\g=\mathfrak{t}\oplus\mathfrak{n}_{+}.$$
Moreover, $\mathfrak{h}=\mathfrak{t}\oplus\sqrt{-1}\mathfrak{t}$ is
 a Cartan subalgebra of $L=\g\Join\g^*=sl(n,\C)$, 
where $\mathfrak{t}\subset\g=sb(n,\C)$ and $\sqrt{-1}\mathfrak{t}\subset\g^*=su(n)$.

\begin{lemma}\label{kappa-sbsu}
Let $(\g=sb(n,\C),\g^*=su(n))$ be the Lie bialgebra as in Example \ref{Lu-Weinstein-exa}.
Let $\kappa\in\g^*$ be defined by Equation \eqref{kappa-eqn}. 
Then we have 
\begin{enumerate}
\item $\kappa\neq0$,
\item $\kappa\in\sqrt{-1}\mathfrak{t}$.
\end{enumerate}
\end{lemma}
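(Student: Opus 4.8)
The plan is to compute the modular vector $\kappa\in\g^*=su(n)$ explicitly by evaluating $\kappa(x)=\operatorname{tr}(ad_x)$ for $x\in\g=sb(n,\C)$, using the decomposition $\g=\mathfrak{t}\oplus\mathfrak{n}_+$ and the identification of $\g^*$ with $su(n)$ via the pairing $\langle x,\xi\rangle=\operatorname{Im}(\operatorname{trace}(x\xi))$. First I would observe that $ad_x$ here is the adjoint action of $x\in sb(n,\C)$ \emph{inside its own Lie algebra} $\g=sb(n,\C)$, so $\operatorname{tr}(ad_x)$ is a trace over the $(n^2-1)$-dimensional space of traceless upper triangular matrices with real diagonal. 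Because $\mathfrak{n}_+$ is a nilpotent ideal, $ad_x$ for $x\in\mathfrak{n}_+$ is nilpotent and contributes zero trace; hence $\kappa$ vanishes on $\mathfrak{n}_+$ and is determined by its values on the abelian part $\mathfrak{t}$. For $h\in\mathfrak{t}$ a diagonal matrix $\operatorname{diag}(a_1,\dots,a_n)$ with $\sum a_i=0$, the eigenvalue of $ad_h$ on the root vector $E_{ij}$ ($i<j$) is $a_i-a_j$, so $\operatorname{tr}(ad_h)=\sum_{i<j}(a_i-a_j)$, which I would simplify to $\sum_i (n-2i+1)a_i$ or an equivalent weighted sum of the $a_i$.

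The key point for part (1) is that this linear functional on $\mathfrak{t}$ is not identically zero: the weights $n-2i+1$ are distinct as $i$ ranges over $1,\dots,n$ (for $n\ge 2$), so $\sum_i(n-2i+1)a_i$ does not vanish for all traceless $(a_i)$; for instance $a=\operatorname{diag}(1,-1,0,\dots,0)$ type choices give a nonzero value. Thus $\kappa\neq 0$. I should be slightly careful to confirm this genuinely uses $n\ge 2$ (for $n=1$ the algebra is trivial and the statement is vacuous), and to note that the contribution from $\mathfrak{t}$ acting on itself is zero since $\mathfrak{t}$ is abelian, so only the action on $\mathfrak{n}_+$ matters.

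For part (2) the task is to identify \emph{which} element of $\g^*=su(n)$ the functional $\kappa$ corresponds to under the pairing $\langle x,\xi\rangle=\operatorname{Im}(\operatorname{trace}(x\xi))$, and to show it lies in the subspace $\sqrt{-1}\mathfrak{t}\subset su(n)$ of purely imaginary diagonal traceless matrices. The strategy is to exhibit an explicit $\xi_0\in\sqrt{-1}\mathfrak{t}$ and verify $\operatorname{Im}(\operatorname{trace}(x\xi_0))=\kappa(x)$ for all $x\in\g$. Writing $\xi_0=\sqrt{-1}\operatorname{diag}(c_1,\dots,c_n)$ with real $c_i$ and $\sum c_i=0$, I would compute $\operatorname{trace}(x\xi_0)$ for $x=h+n$ with $h=\operatorname{diag}(a_1,\dots,a_n)\in\mathfrak{t}$ and $n\in\mathfrak{n}_+$: only the diagonal part of $x$ survives the diagonal matrix $\xi_0$, giving $\operatorname{trace}(x\xi_0)=\sqrt{-1}\sum_i a_i c_i$, whose imaginary part is $\sum_i a_i c_i$ (the $\mathfrak{n}_+$ contribution vanishes since $\xi_0$ is diagonal and the product has zero diagonal, consistent with $\kappa|_{\mathfrak{n}_+}=0$). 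Matching $\sum_i a_i c_i=\sum_i(n-2i+1)a_i$ for all traceless $(a_i)$ forces $c_i=n-2i+1$ up to adding a constant (which is pinned down by the tracelessness constraint $\sum c_i=0$, automatically satisfied here). This determines $\kappa\in\sqrt{-1}\mathfrak{t}$ explicitly and proves (2).

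The main obstacle I anticipate is bookkeeping rather than conceptual: correctly handling the real/imaginary structure of the pairing $\operatorname{Im}(\operatorname{trace}(\cdot))$ and the subtle point that $\g^*$ is realized as $su(n)$ through this specific real bilinear form, so that ``$\kappa\in\sqrt{-1}\mathfrak{t}$'' is a statement about the concrete matrix representing the abstract functional. I would take care to check that the off-diagonal (root-space) directions of $su(n)$ pair to zero against $\mathfrak{t}$, confirming $\kappa$ has no component outside $\sqrt{-1}\mathfrak{t}$, and to verify the tracelessness and reality conditions are consistent so that the candidate $\xi_0$ genuinely lies in $su(n)$.
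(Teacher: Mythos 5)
Your argument is correct, but it takes a genuinely different, more computational route than the paper's. The paper proves (1) abstractly: for $t\in\mathfrak{t}$ it writes $\operatorname{trace}(ad_t)=\sum_{\alpha\in\Delta_{+}}\langle\alpha,t\rangle$ and invokes the fact that the sum of positive roots is a nonzero element of $\mathfrak{h}^*$; and it proves (2) without ever identifying $\kappa$: it shows $\sqrt{-1}\mathfrak{t}=\mathfrak{n}_{+}^{\perp}$ by Killing-form orthogonality together with a dimension count, and then concludes $\kappa\in\mathfrak{n}_{+}^{\perp}$ from the nilpotency of $ad_y$ for $y\in\mathfrak{n}_{+}$ --- the same nilpotency fact you use to kill $\kappa|_{\mathfrak{n}_{+}}$. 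You instead diagonalize $ad_h$ on root vectors and exhibit an explicit $\xi_0\in\sqrt{-1}\mathfrak{t}$ with $\operatorname{Im}(\operatorname{trace}(x\xi_0))=\kappa(x)$ for all $x\in\g$; since the pairing is non-degenerate, this single computation proves (1) and (2) at once and moreover produces a closed formula for $\kappa$, which the paper's proof does not yield. What the paper's route buys in exchange is coordinate-freeness: the annihilator-plus-nilpotency argument transfers to other triangular decompositions with no matrix bookkeeping.

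One small bookkeeping correction to your computation (which you half-anticipated): $sb(n,\C)$ is a \emph{real} Lie algebra whose strictly upper triangular entries are complex, so each root space $\C E_{ij}$ is real two-dimensional and $ad_h$ acts on it by the real scalar $a_i-a_j$ with real trace $2(a_i-a_j)$. Hence $\operatorname{trace}(ad_h)=2\sum_{i<j}(a_i-a_j)=2\sum_i(n-2i+1)a_i$, and your matching argument gives $c_i=2(n-2i+1)$ rather than $n-2i+1$ (the tracelessness check $\sum_i(n-2i+1)=0$ goes through either way). This factor of $2$ affects neither the non-vanishing in (1) nor the membership $\kappa\in\sqrt{-1}\mathfrak{t}$ in (2), so the proof stands.
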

\begin{proof}
\begin{enumerate}
\item
For any $t\in\mathfrak{t}$, we have 
\begin{equation}
\langle\kappa,t\rangle=trace(ad_t)=\sum_{\alpha\in\Delta_{+}}\langle\alpha,t\rangle
=\langle\sum_{\alpha\in\Delta_{+}}\alpha,t\rangle,
\end{equation}
where $\Delta_{+}\subset\mathfrak{h}^*$ is the set of positive roots for the Cartan subalgebra 
$\mathfrak{h}=\mathfrak{t}\oplus\sqrt{-1}\mathfrak{t}$. 
Since $\sum_{\alpha\in\Delta_{+}}\alpha$ is a nonzero vector in $\mathfrak{h}^*$, 
we get $k\neq 0$.   

\item
For any $y\in\mathfrak{n}_{+}$ and $\xi\in\sqrt{-1}\mathfrak{t}$, $y$ and $\xi$ are orthogonal under the Killing form. Hence we have
$$\langle y,\xi\rangle=Im(trace(y\cdot\xi))=0,$$
which implies that $$\sqrt{-1}\mathfrak{t}\subset\mathfrak{n}_{+}^{\perp},$$
where $\mathfrak{n}_{+}^{\perp}$ denotes by the annihilator of 
$\mathfrak{n}_{+}$ in $\g^*$.
As $$\dim\sqrt{-1}\mathfrak{t}=\dim\mathfrak{t}=\dim\g-\dim\mathfrak{n}_{+},$$ 
we get that
\begin{equation}
\sqrt{-1}\mathfrak{t}=\mathfrak{n}_{+}^{\perp}.
\end{equation}
On the other hand,  $ad_y\in End(\g)$ is nilpotent for all $y\in\mathfrak{n}_{+}$.
It implies
$$\kappa(y)=trace(ad_y)=0$$
for all $y\in\mathfrak{n}_{+}$. Thus we have
$$\kappa\in\mathfrak{n}_{+}^{\perp}=\sqrt{-1}\mathfrak{t}.$$
\end{enumerate}
\end{proof}

\begin{proposition}\label{LW-prop}
The first scalar Atiyah class  $c_1(E)$ associated with the triple 
$(L=sl(n,\C), A=\g=sb(n,\C),E=\g^*=su(n))$ does not vanish. 
As a consequence, the Atiyah class $\alpha_E$ associated with the triple 
$(L=sl(n,\C), A=\g=sb(n,\C),E=\g^*=su(n))$ does not vanish.
\end{proposition}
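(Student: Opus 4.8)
The plan is to apply the vanishing criterion of Theorem~\ref{c1-thm} directly. By parts (2) and (3) of that theorem, $c_1(E)$ vanishes if and only if there exists $v\in\g=sb(n,\C)$ with $ad_{\kappa+v}(\g)=0$; that is, if and only if the element $\kappa+v\in L=sl(n,\C)$ centralizes the subalgebra $\g=sb(n,\C)$ with respect to the bracket of $L$, which here is the matrix commutator. So it suffices to show that no such $v$ exists, and I would do this by proving that the centralizer of $sb(n,\C)$ in $sl(n,\C)$ is trivial and then invoking $\kappa\neq0$.

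First I would compute the centralizer $\{W\in sl(n,\C):[W,y]=0\text{ for all }y\in sb(n,\C)\}$ using the decomposition $\g=\mathfrak{t}\oplus\mathfrak{n}_{+}$. Since $\mathfrak{t}$ contains diagonal matrices with pairwise distinct real entries, commuting with all of $\mathfrak{t}$ forces $W$ to be diagonal, hence $W\in\mathfrak{h}=\mathfrak{t}\oplus\sqrt{-1}\mathfrak{t}$. Writing $W=\mathrm{diag}(w_1,\dots,w_n)$ and using $[W,E_{ij}]=(w_i-w_j)E_{ij}$ for the strictly-upper-triangular elementary matrices $E_{ij}\in\mathfrak{n}_{+}$ with $i<j$, commuting with all of $\mathfrak{n}_{+}$ forces $w_1=\cdots=w_n$; tracelessness then gives $W=0$. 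Thus the centralizer is $\{0\}$.

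Finally I would combine this with Lemma~\ref{kappa-sbsu}. Since $\kappa\neq0$ and $\kappa\in su(n)$, while $v\in sb(n,\C)$ and $su(n)\cap sb(n,\C)=\{0\}$ (the two subalgebras form a direct sum in $sl(n,\C)$), the element $\kappa+v$ is never zero. As the only element of $sl(n,\C)$ centralizing $sb(n,\C)$ is $0$, there is no $v\in\g$ with $ad_{\kappa+v}(\g)=0$, so $c_1(E)\neq0$ by Theorem~\ref{c1-thm}. The consequence for $\alpha_E$ follows because $c_1(E)=\frac{\sqrt{-1}}{2\pi}tr(\alpha_E)$ is the image of $\alpha_E$ under the trace morphism, so $\alpha_E=0$ would force $c_1(E)=0$; hence $\alpha_E\neq0$ as well. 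The only step drawing on earlier work is the input $\kappa\neq0$, $\kappa\in su(n)$ from Lemma~\ref{kappa-sbsu}, which guarantees $\kappa+v$ stays nonzero; the centralizer computation is routine linear algebra, and I expect no real obstacle beyond organizing the reduction so that triviality of the centralizer together with $\kappa\neq0$ contradicts the vanishing criterion.
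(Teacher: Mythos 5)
Your proof is correct, but it takes a genuinely different --- and more elementary --- route than the paper. The paper makes the same reduction via Theorem~\ref{c1-thm} to the nonexistence of $v\in\g$ with $ad_{\kappa+v}(\g)=0$, but then argues structurally: it splits $v=v_1+v_2$ along $\g=\mathfrak{t}\oplus\mathfrak{n}_{+}$, checks $[\kappa+v_1,v_2]=0$, invokes uniqueness of the Jordan decomposition $\kappa+v=(\kappa+v_1)+v_2$ (semisimple plus nilpotent) to deduce $ad_{\kappa+v_1}(\mathfrak{n}_{+})=0$, and then pairs against the positive roots to force $\kappa+v_1=0$, whence $\kappa=0$ since $\mathfrak{t}\cap\sqrt{-1}\mathfrak{t}=0$, contradicting Lemma~\ref{kappa-sbsu}; note this uses \emph{both} parts of that lemma, in particular $\kappa\in\sqrt{-1}\mathfrak{t}$. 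You instead compute directly that the centralizer of $sb(n,\C)$ in $sl(n,\C)$ is trivial --- a regular element of $\mathfrak{t}$ forces any centralizing $W$ to be diagonal, the matrices $E_{ij}\in\mathfrak{n}_{+}$ force its entries to be equal, and tracelessness kills it --- and then conclude from $\kappa\neq 0$ and $su(n)\cap sb(n,\C)=\{0\}$ (which is just $L=\g\oplus\g^*$ for the Manin triple) that $\kappa+v\neq 0$ for every $v\in\g$, so no such $v$ exists. Your identification of the double's bracket with the matrix commutator is legitimate, since Example~\ref{Lu-Weinstein-exa} realizes $sl(n,\C)=su(n)\Join sb(n,\C)$ with its standard Lie algebra structure. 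Your route avoids Jordan decomposition and root theory entirely and needs only part~(1) of Lemma~\ref{kappa-sbsu}; its one implicit hypothesis is $n\geq 2$ (needed for a regular element of $\mathfrak{t}$, and in any case already needed for $\kappa\neq 0$). What the paper's heavier argument buys is a mechanism --- the modular vector pairing nontrivially against all positive roots --- that transfers to other Iwasawa-type Manin triples, whereas your centralizer computation is shorter and self-contained; the final step, deducing $\alpha_E\neq 0$ from $c_1(E)=\frac{\sqrt{-1}}{2\pi}\,tr(\alpha_E)\neq 0$, is the same in both.
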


\begin{proof}
Assume that the first scalar Atiyah class  $c_1(E)$ associated with the triple 
$(L=sl(n,\C), A=\g=sb(n,\C),E=\g^*=su(n))$ vanishes.
Then by Theorem \ref{c1-thm}, there exists $v\in\g$ such that 
\begin{equation}\label{kappav-prop-eqn1}
ad_{\kappa+v}(\g)=0.
\end{equation}
As $\g=\mathfrak{t}\oplus\mathfrak{n}_{+}$, $v\in\g$ can be written as $$v=v_1+v_2,$$
where $v_1\in\mathfrak{t}$ and $v_2\in\mathfrak{n}_{+}$. 
By Equation \eqref{kappav-prop-eqn1}, we have
$$[\kappa+v,v_2]=0,$$ which implies 
\begin{equation}\label{kappav-prop-eqn2}
[\kappa+v_1,v_2]=[\kappa+v_1+v_2, v_2]=[\kappa+v, v_2]=0.
\end{equation}
By Lemma \ref{kappa-sbsu}, we get $\kappa\in\sqrt{-1}\mathfrak{t}.$

Since $\kappa+v_1\in\mathfrak{h}=\mathfrak{t}\oplus\sqrt{-1}\mathfrak{t}$ and $v_2\in\mathfrak{n}_{+}$, by Equation \eqref{kappav-prop-eqn2},
$\kappa+v\in sl(n,\C)$ has the Jordan decomposition
\begin{equation}
\kappa+v=(\kappa+v_1)+v_2,
\end{equation}
where $\kappa+v_1$ is the semisimple part and $v_2$ is the nilpotent part.
As a consequence, we get the Jordan decomposition
\begin{equation}
ad_{\kappa+v}=ad_{\kappa+v_1}+ad_{v_2},
\end{equation}
where $ad_{\kappa+v_1}\in End(L)$ is the semisimple part, 
$ad_{v_2}\in End(L)$ is the nilpotent part.
By Equation \eqref{kappav-prop-eqn1}, we obtain
\begin{equation}
ad_{\kappa+v_1}(\g)=0,
\end{equation}
which implies
$$ad_{\kappa+v_1}(\mathfrak{n}_{+})=0.$$ 
%as $\mathfrak{n}_{+}\subset\g$.
Therefore we have
\begin{equation}\label{kappa-prop-eqn3}
\langle\kappa+v_1,\alpha\rangle=0
\end{equation}
for all $\alpha\in\Delta_{+}\subset\mathfrak{h}^*$.
As a consequence of Equation \eqref{kappa-prop-eqn3}, we get
$$\kappa+v_1=0.$$ 
Since $\kappa\in\mathfrak{t}$ and $v_1\in\sqrt{-1}\mathfrak{t}$, we obtain 
$$\kappa=0,$$
which contradicts Lemma \ref{kappa-sbsu}.

Thus $c_1(E)$ does not vanish. And consequently, the Atiyah class $\alpha_E$ does not vanish.
\end{proof}

\begin{bibdiv}
\begin{biblist}

\bib{Atiyah}{article}{
   author={Atiyah, M. F.},
   title={Complex analytic connections in fibre bundles},
   journal={Trans. Amer. Math. Soc.},
   volume={85},
   date={1957},
   pages={181--207},
   issn={0002-9947},
   review={\MR{0086359}},
}

\bib{Batakidis-Voglaire 18}{article}{
   author={Batakidis, Panagiotis},
   author={Voglaire, Yannick},
   title={Atiyah classes and dg-Lie algebroids for matched pairs},
   journal={J. Geom. Phys.},
   volume={123},
   date={2018},
   pages={156--172},
   issn={0393-0440},
   review={\MR{3724780}},
}

\bib{Bordermann 12}{article}{
   author={Bordemann, Martin},
   title={Atiyah classes and equivariant connections on homogeneous spaces},
   conference={
      title={Travaux math\'ematiques. Volume XX},
   },
   book={
      series={Trav. Math.},
      volume={20},
      publisher={Fac. Sci. Technol. Commun. Univ. Luxemb., Luxembourg},
   },
   date={2012},
   pages={29--82},
   review={\MR{3014184}},
}

\bib{C-C-T 13}{article}{
   author={Calaque, Damien},
   author={C\u ald\u araru, Andrei},
   author={Tu, Junwu},
   title={PBW for an inclusion of Lie algebras},
   journal={J. Algebra},
   volume={378},
   date={2013},
   pages={64--79},
   issn={0021-8693},
   review={\MR{3017014}},
   doi={10.1016/j.jalgebra.2012.12.008},
}

\bib{Calaque-VandenBergh}{article}{
   author={Calaque, Damien},
   author={Van den Bergh, Michel},
   title={Hochschild cohomology and Atiyah classes},
   journal={Adv. Math.},
   volume={224},
   date={2010},
   number={5},
   pages={1839--1889},
   issn={0001-8708},
   review={\MR{2646112}},
}

\bib{C-S-X 2016}{article}{
   author={Chen, Zhuo},
   author={Sti{\'e}non, Mathieu},
   author={Xu, Ping},
   title={From Atiyah classes to homotopy Leibniz algebras},
   journal={Comm. Math. Phys.},
   volume={341},
   date={2016},
   number={1},
   pages={309--349},
   issn={0010-3616},
   review={\MR{3439229}},
   doi={10.1007/s00220-015-2494-6},
}

\bib{Hong-Liu 09}{article}{
   author={Hong, Wei},
   author={Liu, Zhangju},
   title={Lie bialgebras on $k^3$ and Lagrange varieties},
   journal={J. Lie Theory},
   volume={19},
   date={2009},
   number={4},
   pages={639--659},
   issn={0949-5932},
   review={\MR{2598982}},
}

\bib{Kapranov 99}{article}{
   author={Kapranov, M.},
   title={Rozansky-Witten invariants via Atiyah classes},
   journal={Compositio Math.},
   volume={115},
   date={1999},
   number={1},
   pages={71--113},
   issn={0010-437X},
   review={\MR{1671737}},
}

\bib{Kontsevich 99}{article}{
   author={Kontsevich, Maxim},
   title={Rozansky-Witten invariants via formal geometry},
   journal={Compositio Math.},
   volume={115},
   date={1999},
   number={1},
   pages={115--127},
   issn={0010-437X},
   review={\MR{1671725}},
}

\bib{Kosmann-Schwarzbach1997}{article}{
   author={Kosmann-Schwarzbach, Y.},
   title={Lie bialgebras, Poisson Lie groups and dressing transformations},
   conference={
      title={Integrability of nonlinear systems},
      address={Pondicherry},
      date={1996},
   },
   book={
      series={Lecture Notes in Phys.},
      volume={495},
      publisher={Springer, Berlin},
   },
   date={1997},
   pages={104--170},
   review={\MR{1636293}},
   doi={10.1007/BFb0113695},
}

\bib{Laurent-Voglaire 15}{article}{
   author = {Laurent-Gengoux, Camille},
   author= {Voglaire, Yannick},
   title = {Invariant connections and PBW theorem for Lie groupoid pairs},
  journal = {ArXiv 507.01051},
  date={ 2015},
}

\bib{Lu-Weinstein 90}{article}{
   author={Lu, Jiang-Hua},
   author={Weinstein, Alan},
   title={Poisson Lie groups, dressing transformations, and Bruhat
   decompositions},
   journal={J. Differential Geom.},
   volume={31},
   date={1990},
   number={2},
   pages={501--526},
   issn={0022-040X},
   review={\MR{1037412}},
}

\bib{M-S-X 15}{article}{
   author={Mehta, Rajan Amit},
   author={Sti\'enon, Mathieu},
   author={Xu, Ping},
   title={The Atiyah class of a dg-vector bundle},
   language={English, with English and French summaries},
   journal={C. R. Math. Acad. Sci. Paris},
   volume={353},
   date={2015},
   number={4},
   pages={357--362},
   issn={1631-073X},
   review={\MR{3319134}},
}

\bib{Nguyen 65}{article}{
   author={Nguyen Van Hai},
   title={Relations entre les diverses obstructions relatives \`a l'existence
   d'une connexion lin\'eaire invariante sur un expace homog\`ene},
   language={French},
   journal={C. R. Acad. Sci. Paris},
   volume={260},
   date={1965},
   pages={45--48},
   review={\MR{0176417}},
}

\bib{Wang 58}{article}{
   author={Wang, Hsien-chung},
   title={On invariant connections over a principal fibre bundle},
   journal={Nagoya Math. J.},
   volume={13},
   date={1958},
   pages={1--19},
   issn={0027-7630},
   review={\MR{0107276}},
}

\end{biblist}
\end{bibdiv}

\end{document}